\DeclareMathOperator{\val}{val}
\DeclareMathOperator{\rep}{rep}
\theoremstyle{theorem}
\newtheorem{theorem}{Theorem}
\newtheorem{lemma}[theorem]{Lemma}
\newtheorem{corollary}[theorem]{Corollary}
\theoremstyle{definition}
\newtheorem{definition}[theorem]{Definition}
\newtheorem{example}[theorem]{Example}
\theoremstyle{remark}
\newtheorem{remark}[theorem]{Remark}
\begin{document}
\title{Generalized Pascal triangle for binomial coefficients of words}
\author{Julien LEROY}
\author{Michel RIGO}
\author{Manon STIPULANTI}
\thanks{M. Stipulanti is supported by a FRIA grant.}
\address{Universit\'e de Li\`ege, Institut de math\'ematique, All\'ee de la d\'ecouverte 12 (B37),
4000 Li\`ege, Belgium\newline
J.Leroy@ulg.ac.be, M.Rigo@ulg.ac.be, M.Stipulanti@ulg.ac.be}
\subjclass[2010]{28A80 (primary), and 28A78, 11B85, 68R15 (secondary)} 

\begin{abstract}
    We introduce a generalization of Pascal triangle based on binomial coefficients of finite words. These coefficients count the number of times a word appears as a subsequence of another finite word. Similarly to the Sierpi\'nski gasket that can be built as the limit set, for the Hausdorff distance, of a convergent sequence of normalized compact blocks extracted from Pascal triangle modulo~$2$, we describe and study the first properties of the subset of $[0,1]\times [0,1]$ associated with this extended Pascal triangle modulo a prime $p$.
\end{abstract}

\maketitle
\section{Introduction}

Pascal triangle and the corresponding Sierpi\'nski gasket are well studied objects (see, for instance, \cite{stewart} for a survey). They exhibit self-similarity features and have connections with dynamical systems, cellular automata, number theoretic questions and automatic sequences \cite{AB,Adam,CLR,vonH,Mauldin}.
In this paper, we will consider a variation of these two objects by extending binomial coefficients to the free monoid $A^*$ where $A$ is a finite alphabet. 

Let us start with basic combinatorial definitions. A {\em finite word} is simply a finite sequence of letters belonging to a finite set called {\em alphabet}. In combinatorics on words, one can introduce 
the binomial coefficient $\binom{u}{v}$ of two finite words $u$ and $v$ which is
the number of times $v$ occurs as a subsequence of $u$ (meaning as a
``scattered'' subword). As an example, we consider two particular words over $\{0,1\}$ and 
$$\binom{101001}{101}=6.$$
Indeed, if we index the letters of the first word $u_1u_2\cdots u_6=101001$, we have 
$$u_1u_2u_3=u_1u_2u_6=u_1u_4u_6=u_1u_5u_6=u_3u_4u_6=u_3u_5u_6=101.$$
Observe that this concept is a natural generalization of the binomial coefficients of integers. For a single letter alphabet $\{a\}$, we have
\begin{equation}
    \label{eq:1lettre}
\binom{a^m}{a^n}=\binom{m}{n},\quad \forall\, m,n\in\mathbb{N}
\end{equation}
where $a^m$ denotes the concatenation of $m$ letters $a$. For more on these binomial coefficients, see for instance \cite[Chap.~6]{Lot}.

In this paper, we are interested in Pascal triangle obtained when considering binomial coefficients of words. To define such a triangular array, we will consider all the words over a finite alphabet and we order them by genealogical ordering (i.e., first by length, then by the classical lexicographic ordering for words of the same length assuming $0<1$). For the sake of simplicity, we will mostly discuss the case of a $2$-letter alphabet $\{0,1\}$ and to relate these words to base-$2$ expansions of integers, we will assume without loss of generality that the non-empty words start with $1$. (If leading zeroes were allowed, then different words could represent the same integer.)

\begin{definition}
We let $\rep_2(n)$ denote the greedy base-$2$ expansion of $n\in\mathbb{N}_{>0}$ starting with $1$ where the notation $\mathbb{N}_{>0}$ stands for the set of all positive integers. We set $\rep_2(0)$ to be the empty word denoted by $\varepsilon$. Let $L=\{\varepsilon\}\cup 1\{0,1\}^*$ be the set of base-$2$ expansions of the integers. We let $L_n$ denote the set of words of length at most $n$ belonging to $L$, i.e.,
$$L_n=(\{\varepsilon\}\cup 1\{0,1\}^*)\cap \{0,1\}^{\le n}.$$
Note that $\# L_n=2^n$ for all $n\ge 0$. 
\end{definition}
The first few values in the generalized Pascal triangle $T$ that we will deal with are given in Table~\ref{tab:tp}. 
\begin{table}[h!t]
$$\begin{array}{r|cccccccc}
&\varepsilon&1&10&11&100&101&110&111\\
\hline
 \varepsilon&\mathbf{1} & 0 & 0 & 0 & 0 & 0 & 0 & 0 \\
 1&\mathbf{1} & \mathbf{1} & 0 & 0 & 0 & 0 & 0 & 0 \\
 10&1 & 1 & 1 & 0 & 0 & 0 & 0 & 0 \\
 11&\mathbf{1} & \mathbf{2} & 0 & \mathbf{1} & 0 & 0 & 0 & 0 \\
 100&1 & 1 & 2 & 0 & 1 & 0 & 0 & 0 \\
 101&1 & 2 & 1 & 1 & 0 & 1 & 0 & 0 \\
 110&1 & 2 & 2 & 1 & 0 & 0 & 1 & 0 \\
 111&\mathbf{1} & \mathbf{3} & 0 & \mathbf{3} & 0 & 0 & 0 & \mathbf{1} \\
\end{array}$$
\caption{The  first few values in the generalized Pascal triangle.}
    \label{tab:tp}
\end{table}
These values correspond to the words $\varepsilon,1,10,11,100,101,110,111$.

Considering the intersection of the lattice $\mathbb{N}^2$ with the region $[0,2^j]\times [0,2^j]$, the first $2^j$ rows and columns of the usual Pascal triangle $(\binom{m}{n}\bmod{2})_{m,n< 2^j}$ provide a coloring of this lattice. If we normalize this region by a homothety of ratio $1/2^j$, we get a sequence in $[0,1]\times [0,1]$ converging, for the Hausdorff distance, to the Sierpi\'nski gasket when $j$ tends to infinity \cite{vonH}.

It is therefore natural to ask whether a similar phenomenon occurs in our extended framework. The generalized Pascal triangle limited to words in $L_n$ has $2^n$ rows and columns. After coloring and normalization (see Figure~\ref{fig:U3} for a picture of the cases when $n=3,4$ and Figure~\ref{fig:U9} in the appendix for $n=9$), can we expect the convergence to an analogue of the Sierpi\'nski gasket and could we describe the possible limit object? We answer positively and the limit object is described as the topological closure of a union of segments that are described through a simple combinatorial property (see Figure~\ref{fig:approx} in the appendix). For the sake of simplicity, we will mostly describe the coloring modulo $2$. At the end of the paper, we shortly discuss colorings modulo a prime number $p$ (see Figure~\ref{fig:U72} for coefficients congruent to $2$ modulo $3$).

In our construction, we take at each step exactly $2^n$ words and a scaling (or normalization) factor of $1/2^n$. For instance, the authors in \cite{barbe} discussed which sequences can be used as scaling factors for objects related to automatic sequences. In particular, the classical Pascal triangle modulo $p^d$ is shown to be $p$-automatic in \cite{AB}, where $p$ is prime and the scaling sequence has to be of the form $(p^{kn+j})_{n\ge 0}$ with $j=0,\ldots,p-1$.

\begin{remark}
We can make some extra comments about our study. 
Note that, from \eqref{eq:1lettre} the usual Pascal triangle is a ``sub-array'' of our extended triangle $T$ by considering only words of the form $1^m$ and $1^n$. In Table~\ref{tab:tp}, the elements of the usual triangle are written in bold. We can also observe that the second column of this extended triangle $T$ is exactly $(s_2(n))_{n\ge 0}$ where $s_2$ denotes the sum-of-digits function for base-$2$ expansions of integers \cite{delange}. Thus, considering these values modulo $2$, the second column is exactly the well-known Thue--Morse word, see \cite{Pin} for connections between binomial coefficients of words and $p$-adic topology. Moreover, each column of $T$ is related to the language (i.e., the set of finite words) 
$$L(v,r,p):=\left\{u\in A^*\mid \binom{u}{v}\equiv r\bmod{p}\right\},\ v\in A^*, r\in\{0,\ldots,p-1\}.$$
In formal language theory \cite{Eil}, a language is a $p$-group language if and only if it is a Boolean combination of languages $L(v_i,r_j,p)$. For an extension of a theorem of Mahler in $p$-adic analysis to functions defined on the free monoid $A^*$, see \cite{Pin2}.
\end{remark}

This paper is organized as follows. In Section~\ref{sec:2}, we collect some results on binomial coefficients. In Section~\ref{sec:3}, we define a subset $T_n$ of $[0,2^n]\times [0,2^n]$ associated with the parity of the first $2^n$ binomial coefficients. We prove that the subset of non-zero binomial coefficients for words in $L_n$ contains exactly $3^n$ elements. The set $T_n$ is then normalized by a factor $1/2^n$ to give the sequence $(U_n)_{n\ge 0}$ of subsets in $[0,1]\times[0,1]$. In the second part of Section~\ref{sec:3}, we independently build a compact set $\mathcal{A}_0$ from a countable union of segments and an important combinatorial condition $(\star)$. This condition is at the heart of our discussions (and will be generalized in Section~\ref{sec:5} to take into account the situation modulo $p$). From $\mathcal{A}_0$ and using two maps $h$ and $c$, the first one being a homothety, we define a sequence of compact sets naturally converging to a limit compact set $\mathcal{L}$. In Section~\ref{sec:4}, similarly to the construction of the Sierpi\'nski gasket, we prove that $(U_n)_{n\ge 0}$ tends to $\mathcal{L}$. For the sake of simplicity, we have limited our presentation to the case of odd binomial coefficients. In Section~\ref{sec:5}, we briefly sketch the main changes when considering congruences modulo a prime $p$. In the last section, we present some pictures of the studied sets. The limit set $\mathcal{L}$ convinces us that these extended Pascal triangles contain many interesting combinatorial and dynamical questions to consider.

\section{Basic results on binomial coefficients}\label{sec:2}
In the first part of this section, we collect well-known facts on binomial coefficients of words or integers. For a proof of the first two lemmas, we refer the reader to \cite[Chap.~6]{Lot}. 

\begin{lemma}\label{lem:binomial}
Let $u,v$ be two words of $L$ and let $a,b$ be two letters in $\{0,1\}$. Then we have
    $$\binom{ua}{vb}=\binom{u}{vb}+\delta_{a,b}\binom{u}{v}$$
where $\delta_{a,b}$ is equal to $1$ if $a=b$, $0$ otherwise.
\end{lemma}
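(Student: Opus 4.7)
The plan is to argue by a case analysis on whether the occurrence of $vb$ in $ua$ uses the final letter $a$ of $ua$ or not. I will count scattered subword occurrences by indexing positions: an occurrence of $vb$ of length $|v|+1$ in $ua$ (of length $|u|+1$) is determined by a strictly increasing sequence of indices $i_1<i_2<\cdots<i_{|v|+1}$ in $\{1,\ldots,|u|+1\}$ such that the letters of $ua$ at these positions spell out $vb$. I will split these occurrences into two disjoint classes according to whether $i_{|v|+1}=|u|+1$ (the final index points at the last letter $a$) or $i_{|v|+1}\le|u|$.

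In the first class, the occurrence avoids position $|u|+1$ entirely, so it lies within $u$ and the sequence $(i_1,\ldots,i_{|v|+1})$ is exactly an occurrence of $vb$ as a scattered subword of $u$. By definition, the number of such occurrences is $\binom{u}{vb}$. In the second class, the last letter of the occurrence is $a$ (the final letter of $ua$), and this letter must match the required letter $b$ of $vb$. If $a\neq b$, this class is empty; if $a=b$, then the remaining indices $(i_1,\ldots,i_{|v|})$ form an occurrence of $v$ as a scattered subword of $u$, so this class contributes exactly $\binom{u}{v}$ occurrences. Together, the second class contributes $\delta_{a,b}\binom{u}{v}$.

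Adding the two contributions yields the claimed identity
$$\binom{ua}{vb}=\binom{u}{vb}+\delta_{a,b}\binom{u}{v}.$$
There is no genuine obstacle here: this is essentially a bookkeeping argument, and the only point that requires minor care is to check that the two classes are disjoint and jointly exhaustive, which is immediate from the dichotomy $i_{|v|+1}=|u|+1$ versus $i_{|v|+1}\le|u|$. Note also that the assumption $u,v\in L$ (so that they begin with $1$ when nonempty) plays no role in this combinatorial counting; the identity is valid for arbitrary finite words over $\{0,1\}$, which matches the statement in \cite[Chap.~6]{Lot}.
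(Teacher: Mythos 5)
Your proof is correct: the paper gives no argument of its own for this lemma (it simply cites Lothaire, Chap.~6), and your case split on whether the occurrence of $vb$ uses the final position of $ua$ is exactly the standard counting argument found there. Your closing remark that membership in $L$ is irrelevant is also accurate.
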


\begin{lemma}\label{lem:binomial2}
Let $s,t,w$ be three words of $L$. Then we have
    $$\binom{sw}{t}=\sum_{\substack{u,v\in L\\ uv=t}} \binom{s}{u}\binom{w}{v}.$$
\end{lemma}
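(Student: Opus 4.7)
The plan is a direct bijective argument, which can be viewed as the word-analogue of the Vandermonde convolution $\binom{m+n}{k}=\sum_{i+j=k}\binom{m}{i}\binom{n}{j}$: every occurrence of $t$ inside the concatenation $sw$ is forced to split into an occurrence of some prefix of $t$ inside $s$ and an occurrence of the complementary suffix inside $w$.

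By the very definition of binomial coefficients of words, $\binom{sw}{t}$ equals the number of strictly increasing tuples $(i_1,\ldots,i_{|t|})$ of positions in $\{1,\ldots,|sw|\}$ such that $(sw)_{i_1}\cdots(sw)_{i_{|t|}}=t$. For any such tuple I would let $j\in\{0,1,\ldots,|t|\}$ denote the number of indices $i_k$ that fall in $\{1,\ldots,|s|\}$. Then the first $j$ selected positions yield an occurrence inside $s$ of the length-$j$ prefix $u$ of $t$, while the remaining $|t|-j$ selected positions, shifted by $-|s|$, yield an occurrence inside $w$ of the complementary suffix $v$; by construction $uv=t$.

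Conversely, given any factorization $t=uv$, any occurrence of $u$ inside $s$, and any occurrence of $v$ inside $w$, concatenating the two tuples of positions (the latter shifted by $+|s|$) reconstructs a unique occurrence of $t$ inside $sw$. This gives a bijection between the tuples counted by $\binom{sw}{t}$ and the disjoint union, taken over all factorizations $t=uv$, of the Cartesian product of occurrences of $u$ in $s$ with occurrences of $v$ in $w$. Passing to cardinalities yields the claimed identity, once one notes that pairs $(u,v)$ for which $u$ or $v$ fails to occur as a subsequence on the relevant side simply contribute $0$ and can therefore be discarded from or included in the summation range without effect.

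No step is genuinely delicate; the only items to watch are the boundary cases $j=0$ and $j=|t|$, handled by the usual convention $\binom{x}{\varepsilon}=1$ for every word $x$. An alternative route is induction on $|w|$ using Lemma~\ref{lem:binomial}: one writes $w=w'a$ and $t=t'b$ (the case $t=\varepsilon$ being immediate), applies Lemma~\ref{lem:binomial} to split $\binom{sw}{t}$ as $\binom{sw'}{t}+\delta_{a,b}\binom{sw'}{t'}$, and then invokes the inductive hypothesis on both summands, reorganizing them into a sum indexed by the factorizations of $t$; this sidesteps the positional bookkeeping but is less transparent than the bijection above.
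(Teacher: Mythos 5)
Your bijective argument is correct and is the standard proof of this identity (the word-analogue of Vandermonde's convolution); the inductive alternative via Lemma~\ref{lem:binomial} that you sketch also goes through. There is, however, nothing in the paper to compare it against: the authors do not prove Lemma~\ref{lem:binomial2} at all, but quote it from Lothaire, Chapter~6, where it is established by essentially the splitting argument you give. So your proof is the expected one.

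One point deserves explicit attention. Your bijection yields the identity with the sum ranging over \emph{all} factorizations $t=uv$ with $u,v\in\{0,1\}^*$, whereas the statement as printed restricts the summation to $u,v\in L=\{\varepsilon\}\cup 1\{0,1\}^*$. These are not the same: the restriction discards every factorization whose suffix $v$ begins with $0$, and such terms can be nonzero. For instance, with $s=1$, $w=101$, $t=10$ one has $\binom{sw}{t}=\binom{1101}{10}=2$, while the sum restricted to $u,v\in L$ gives only $\binom{1}{\varepsilon}\binom{101}{10}=1$; the missing term $\binom{1}{1}\binom{101}{0}=1$ comes from the factorization $(u,v)=(1,0)$ with $v\notin L$. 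Thus what you prove is the correct form of the identity, and the condition $u,v\in L$ in the statement should be read as $u,v\in\{0,1\}^*$ (this is also how the lemma is used later in the paper, where the case analysis on $|f|$ versus $|v|$ never invokes membership in $L$). Your closing remark about discarding zero contributions only covers factorizations where $u$ or $v$ fails to occur as a subsequence; it does not absorb this discrepancy, so it is worth flagging separately rather than folding it into that remark.
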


The main observation of the next lemma is that if $\binom{u}{vb}\equiv 1\bmod{2}$ for some $b\in\{0,1\}$, then there exists $a\in\{0,1\}$ such that $\binom{ua}{vb}\equiv 1\bmod{2}$.
\begin{lemma}\label{lem:carres}
Let $u,v$ be words of $L$. The following tables display the values modulo $2$ of the binomial coefficient $\binom{ua}{vb}$ for all $a,b\in\{0,1\}$ when the values modulo $2$ of the binomial coefficients $\binom{u}{v}$ and $\binom{u}{vb}$ with $b\in\{0,1\}$ are known.
\begin{align*}
&
\begin{array}{c|c|cc}
 &v&v0&v1\\
    \hline
u&0&0&0\\
\hline
u0& &0&0\\
u1& &0&0\\
\end{array} \quad
\begin{array}{c|c|cc}
 &v&v0&v1\\
    \hline
u&0&0&1\\
\hline
u0& &0&1\\
u1& &0&1\\
\end{array} \quad 
\begin{array}{c|c|cc}
 &v&v0&v1\\
    \hline
u&0&1&0\\
\hline
u0& &1&0\\
u1& &1&0\\
\end{array} \quad
\begin{array}{c|c|cc}
 &v&v0&v1\\
    \hline
u&0&1&1\\
\hline
u0& &1&1\\
u1& &1&1\\
\end{array} \\
&
\begin{array}{c|c|cc}
 &v&v0&v1\\
    \hline
u&1&0&0\\
\hline
u0& &1&0\\
u1& &0&1\\
\end{array} \quad
\begin{array}{c|c|cc}
 &v&v0&v1\\
    \hline
u&1&0&1\\
\hline
u0& &1&1\\
u1& &0&0\\
\end{array} \quad 
\begin{array}{c|c|cc}
 &v&v0&v1\\
    \hline
u&1&1&0\\
\hline
u0& &0&0\\
u1& &1&1\\
\end{array} \quad
\begin{array}{c|c|cc}
 &v&v0&v1\\
    \hline
u&1&1&1\\
\hline
u0& &0&1\\
u1& &1&0\\
\end{array}
\end{align*}
\end{lemma}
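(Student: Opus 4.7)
The proof is a direct computation from Lemma~\ref{lem:binomial}. Applied to each of the four pairs $(a,b) \in \{0,1\}^2$, that lemma yields
\begin{align*}
\binom{u0}{v0} &= \binom{u}{v0} + \binom{u}{v}, & \binom{u0}{v1} &= \binom{u}{v1},\\
\binom{u1}{v0} &= \binom{u}{v0}, & \binom{u1}{v1} &= \binom{u}{v1} + \binom{u}{v}.
\end{align*}
Reducing modulo $2$, the four parities on the left-hand sides depend only on the triple $\bigl(\binom{u}{v}, \binom{u}{v0}, \binom{u}{v1}\bigr) \bmod 2 \in \{0,1\}^3$.

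The plan is then to enumerate the eight possible values of this triple and to check, case by case, that the resulting $2\times 2$ block of parities matches the corresponding sub-table in the statement. For instance, in the case $\bigl(\binom{u}{v}, \binom{u}{v0}, \binom{u}{v1}\bigr) \equiv (1,0,0) \pmod{2}$, the four formulas give $\binom{u0}{v0} \equiv 1$, $\binom{u0}{v1} \equiv 0$, $\binom{u1}{v0} \equiv 0$, $\binom{u1}{v1} \equiv 1$, which is exactly the fifth displayed sub-table. The remaining seven cases are handled in the same way.

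There is no serious obstacle; the verification is essentially mechanical bookkeeping over eight cases. What the lemma is really asserting, and what makes it useful for the rest of the paper, is that the triple $\bigl(\binom{u}{v}, \binom{u}{v0}, \binom{u}{v1}\bigr) \bmod 2$ \emph{completely determines} the $2\times 2$ block $\bigl(\binom{ua}{vb}\bigr)_{a,b \in \{0,1\}} \bmod 2$. In particular, whenever some $\binom{u}{vb}$ is odd, at least one of $\binom{u0}{vb}, \binom{u1}{vb}$ is odd as well, which is the key observation highlighted before the statement and the seed of the substitution-like self-similarity exploited in Sections~\ref{sec:3} and~\ref{sec:4}.
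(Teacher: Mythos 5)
Your proof is correct and follows exactly the paper's approach: the paper's own proof is the one-line remark that the tables follow directly from Lemma~\ref{lem:binomial}, and you have simply spelled out the four instances of that recurrence and the resulting eight-case check. Nothing is missing.
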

\begin{proof}
It directly follows from Lemma~\ref{lem:binomial}.
\end{proof}

Let us also recall Lucas' theorem relating classical binomial coefficients modulo a prime $p$ with base-$p$ expansions. See \cite[p. 230]{lucas} or \cite{lucas2}. Note that in the following statement, if the base-$p$ expansions of $m$ and $n$ are not of the same length, then we pad the shorter with leading zeroes.
\begin{theorem}\label{thm:lucas}
Let $m$ and $n$ be two non-negative integers and let $p$ be a prime. If 
$$m=m_kp^k + m_{k-1}p^{k-1} + \cdots + m_1 p + m_0$$ and $$n=n_kp^k + n_{k-1}p^{k-1} + \cdots + n_1 p + n_0$$ with $m_i,n_i\in\{0,\ldots,p-1\}$ for all $i$, then the following congruence relation holds $$\binom{m}{n}\equiv \prod _{i=0}^k \binom {m_i}{n_i} \bmod p,$$
using the following convention: $\binom {m}{n}= 0$ if $m < n$.
\end{theorem}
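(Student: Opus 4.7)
The plan is to prove the congruence via generating functions in the polynomial ring $\mathbb{F}_p[x]$. The whole argument hinges on the Frobenius-type identity $(1+x)^p \equiv 1+x^p \pmod{p}$, which itself is immediate from the ordinary binomial theorem together with the fact that $p$ divides $\binom{p}{i}$ for $1 \le i \le p-1$.

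First, I would iterate this identity to obtain $(1+x)^{p^i} \equiv 1 + x^{p^i} \pmod{p}$ for every $i \ge 0$ by a straightforward induction on $i$. Then, starting from the base-$p$ expansion $m = m_0 + m_1 p + \cdots + m_k p^k$, I would write
$$(1+x)^m = \prod_{i=0}^{k} \left((1+x)^{p^i}\right)^{m_i} \equiv \prod_{i=0}^{k} \left(1+x^{p^i}\right)^{m_i} \pmod{p}.$$
Expanding each factor by the ordinary binomial theorem gives
$$\prod_{i=0}^{k} \left(1+x^{p^i}\right)^{m_i} = \prod_{i=0}^{k} \sum_{j_i=0}^{m_i} \binom{m_i}{j_i} x^{j_i p^i}.$$

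The final step is to identify the coefficient of $x^n$ on both sides. On the left, it is $\binom{m}{n}$ by definition. On the right, a monomial $x^{j_0} x^{j_1 p} \cdots x^{j_k p^k}$ equals $x^n$ only when $\sum j_i p^i = n$; since $0 \le j_i \le m_i \le p-1$, uniqueness of the base-$p$ representation forces $j_i = n_i$ for each $i$, so the coefficient extracted is exactly $\prod_{i=0}^k \binom{m_i}{n_i}$. The convention $\binom{m_i}{n_i}=0$ when $n_i > m_i$ is automatically respected since the corresponding $j_i$ would exceed $m_i$ and contribute zero.

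The only genuinely delicate point, beyond verifying Frobenius, is this uniqueness argument when matching coefficients: one must check that the digit bounds $n_i \le p-1$ force a unique decomposition $n = \sum j_i p^i$ with $0 \le j_i \le p-1$. This is where the hypothesis that we work with honest base-$p$ digits (after padding with leading zeros, as the statement explicitly allows) is used in an essential way.
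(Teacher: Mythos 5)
Your argument is correct and complete: the Frobenius identity $(1+x)^p\equiv 1+x^p \pmod p$, its iteration to $(1+x)^{p^i}\equiv 1+x^{p^i}$, the factorization of $(1+x)^m$ along the base-$p$ digits of $m$, and the extraction of the coefficient of $x^n$ using uniqueness of base-$p$ representations (which also handles the convention $\binom{m_i}{n_i}=0$ when $n_i>m_i$) together constitute the standard generating-function proof of Lucas' theorem. Note, however, that the paper offers no proof of this statement to compare against---it is recalled as a classical result with references to Lucas and to Fine---so your write-up simply supplies a correct, self-contained proof where the paper defers to the literature.
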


\section{Graphical representation}\label{sec:3}
We let $w_i$ denote the $i$th word of the language $L$ in the genealogical order. Note that $w_i=\rep_2(i)$ for $i\ge 0$. Considering the intersection of the lattice $\mathbb{N}^2$ with the region $[0,2^n]\times [0,2^n]$, the first $2^n$ rows and columns of the generalized Pascal triangle $$\left(\binom{w_i}{w_j}\bmod{2}\right)_{0\le i,j< 2^n}$$ provide a coloring of this lattice leading to the definition of a sequence $(T_n)_{n\ge 0}$ of subsets of $\mathbb{R}^2$. If we normalize these $T_n$ by a homothety of ratio $1/2^n$, we will define a sequence $(U_n)_{n\ge 0}$ in $[0,1]\times [0,1]$.
\subsection{Definition of the sequences $(T_n)_{n\in\mathbb{N}}$ and $(U_n)_{n\in\mathbb{N}}$}
\begin{definition}\label{def:Tn}
Let $Q:=[0,1]\times[0,1]$. Consider the sequence $(T_n)_{n\ge 0}$ of sets in $\mathbb{R}^2$ defined for all $n\ge 0$ by 

$$
T_n:= \bigcup \left\{(\val_2(v),\val_2(u))+Q\mid u,v\in L_n, \binom{u}{v}\equiv 1\bmod{2}\right\}\subset [0,2^n]\times [0,2^n].
$$
Each $T_n$ is a finite union of unit squares and is thus compact.
\end{definition}

\begin{remark}
In Table~\ref{tab:fv}, we count the number of unit squares in $T_n$ for the first few values of $n$ and we compare this quantity to the number of positive binomial coefficients of pairs of words in $L_n$.
    \begin{table}[h!t]
        $$\begin{array}{r|cccccccccccc}
            &1&2&3&4&5&6&7&8&9&10\\
            \hline
         \# \text{ unit squares}& 3& 8& 22& 62& 166& 458& 1258& 3510& 9838& 27598\\
         \# \text{ positive coefficients}& 3& 9& 27& 81& 243& 729& 2187& 6561& 19683& 59049\\
        \end{array}$$
        \caption{Number of unit squares in $T_n$, $n=1,\ldots,10$.}
        \label{tab:fv}
    \end{table}
Observe that, as shown in the next lemma, the number of positive binomial coefficients of pairs of words in $L_n$ is equal to $3^n$.
\end{remark}

\begin{lemma}
For all $n \in \mathbb{N}$, the number of pairs of words in $L_n$ having a positive binomial coefficient is equal to $3^n$.
\end{lemma}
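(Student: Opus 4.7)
Since $\binom{u}{v}>0$ is equivalent to $v$ occurring as a subsequence of $u$, my plan is to count such pairs $(u,v) \in L_n \times L_n$ by fixing $v = v_1 \cdots v_m$ and parameterizing the compatible $u$'s via the \emph{leftmost} canonical embedding of $v$ into $u$, defined recursively by $\pi(i) := \min\{j > \pi(i-1) : u_j = v_i\}$ with $\pi(0) = 0$.

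The leftmost constraint forces the unused positions of $u$ sandwiched between two consecutive matched letters to avoid the next letter to be matched. Writing $\overline{0}=1$, $\overline{1}=0$, this yields a unique factorization
\[
u \;=\; \overline{v_1}^{a_0}\, v_1\, \overline{v_2}^{a_1}\, v_2\, \cdots\, \overline{v_m}^{a_{m-1}}\, v_m\, w,
\]
where $a_0, \ldots, a_m \ge 0$, $w \in \{0,1\}^{a_m}$, and $m + a_0 + \cdots + a_m \le n$; conversely, every such tuple reconstructs a unique admissible pair. The requirement $u \in L$ forces $a_0 = 0$ whenever $v$ is nonempty, since $v_1 = 1$ would otherwise place a block of leading zeroes $\overline{v_1}^{a_0} = 0^{a_0}$ on $u$.

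From this decomposition, I would count as follows: the case $v = \varepsilon$ contributes $\#L_n = 2^n$; for each fixed length $m \ge 1$, there are $2^{m-1}$ words $v \in L$ of length $m$, and the number of admissible $u$'s is the weighted composition sum $\sum_{a_1+\cdots+a_m \le n-m} 2^{a_m}$, the factor $2^{a_m}$ recording the free choice of the tail $w$. Swapping the order of summation and invoking the binomial identity $\sum_{k=0}^{N} 2^k \binom{N}{k} = 3^N$ should collapse the double sum to a geometric series $\sum_{j=0}^{n-1} 2^j\, 3^{n-1-j} = 3^n - 2^n$; adding the empty-$v$ contribution $2^n$ gives the desired total of $3^n$.

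The main obstacle will be to rigorously justify the bijection underlying the factorization: given arbitrary admissible data $(v, a_1, \ldots, a_m, w)$, one must verify that the reconstructed $u$ indeed has $v$ as a subsequence whose leftmost canonical embedding is exactly the prescribed one. This should follow from the observation that each forced block $\overline{v_{i+1}}^{a_i}$ contains no occurrence of $v_{i+1}$, so the greedy search for the leftmost $v_{i+1}$ after position $\pi(i)$ lands exactly at the placed position.
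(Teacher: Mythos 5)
Your proposal is correct, but it takes a genuinely different route from the paper. The paper argues by induction on $n$: it partitions the set $V_n$ of positive-coefficient pairs with $|u|=n$ according to $|v|$ and shows, via the Pascal-like recurrence of Lemma~\ref{lem:binomial}, that $V_{n+1}$ is the union of the images of $V_n$ under four affine maps whose overlaps force $\#V_{n+1}=3\#V_n$; this self-similar decomposition foreshadows the fractal structure studied later in the paper. You instead count directly: for fixed $v=v_1\cdots v_m$, the leftmost embedding gives the standard canonical factorization $u=v_1\,\overline{v_2}^{a_1}v_2\cdots \overline{v_m}^{a_{m-1}}v_m\,w$ (with $a_0=0$ forced by $u\in L$), yielding the explicit count $\sum_{k=0}^{n-m}2^k\binom{n-k-1}{m-1}$ of admissible $u$'s via stars and bars on $(a_1,\ldots,a_{m-1})$; summing over the $2^{m-1}$ choices of $v$ and swapping the order of summation gives $\sum_{k=0}^{n-1}2^k3^{n-1-k}=3^n-2^n$, and the empty $v$ adds $2^n$. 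I checked the computation and the bijection; both are sound, and the verification you flag as the ``main obstacle'' (that each forced block $\overline{v_i}^{a_{i-1}}$ contains no $v_i$, so the greedy search lands on the placed position) is exactly the right point and goes through by induction on $i$. Your approach buys an explicit refined count (the number of $u$'s compatible with each fixed $v$) and avoids induction on $n$ altogether, at the cost of not exposing the self-similarity that the paper's four maps $f_1,\ldots,f_4$ make visible and that motivates the rest of the construction. The only step you leave implicit is the routine evaluation $\#\{(a_1,\ldots,a_{m-1})\ge 0: a_1+\cdots+a_{m-1}\le n-m-k\}=\binom{n-k-1}{m-1}$, which you should spell out in a final write-up.
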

\begin{proof}
For any positive integer $n$, let $V_n$ denote the set of pairs of integers $(x,y)$ such that $2^{n-1} \leq y < 2^n$, $0 \leq x \leq y$ and $\binom{\rep_2(y)}{\rep_2(x)}>0$.
Thus, $\# V_n$ corresponds to the number of pairs of words in $(L_n\setminus  L_{n-1}) \times L_n$ having a positive binomial coefficient.
We prove that $\# V_n = 2\cdot 3^{n-1}$ by induction on $n \geq 1$. This proves the result since the number of positive binomial coefficients of pairs of words in $L_0$ is $1 = 3^0$.

The result is clear for $n = 1$. Let us suppose it is true up to $n$ and let us prove it for $n+1$.
For all integers $n \geq 1$ and $m \geq 0$, consider the set  
\[
X_{m,n} := 
	\begin{cases}
		\emptyset,
			& \text{if } m>n; \\
		V_n \cap (\{0\} \times \mathbb{N}),
		 	& \text{if } m = 0;	\\
		V_n \cap ([2^{m-1},2^m) \times \mathbb{N}),  
			& \text{otherwise}.
	\end{cases}
\]
We thus have the following partition 
\[
	V_n = \bigcup_{m=0}^n X_{m,n},
\]
with, for $m \geq 1$, 
\[
	X_{m,n} = \left\{(\val_2(v),\val_2(u))\mid u\in L_n\setminus L_{n-1}, v\in L_m\setminus L_{m-1}, \binom{u}{v}>0\right\}.
\]		
Let us define the functions $f_1$, $f_2$, $f_3$ and $f_4$ by
\begin{eqnarray*}
	f_1(x,y) &=& (2x,2y), 			\\ 
	f_2(x,y) &=& (2x+1,2y+1), 	\\
	f_3(x,y) &=& (x,2y),			\\
	f_4(x,y) &=& (x,2y+1).
\end{eqnarray*}
It follows from Lemma~\ref{lem:binomial} that for all $n \geq 1$ and $m \geq 0$, 
\[
	X_{m+1,n+1} = f_1(X_{m,n}) \cup f_2(X_{m,n}) \cup f_3(X_{m+1,n}) \cup f_4(X_{m+1,n}). 
\]
We thus get
\begin{eqnarray*}
	V_{n+1} 
	&=& \bigcup_{m=0}^{n+1} X_{m,n+1}	\\
	&=& \left(\{0\} \times ([2^n,2^{n+1}) \, \cap \, \mathbb{N})\right) \\
	& &	\cup \left( \bigcup_{m=0}^{n} f_1(X_{m,n}) \cup f_2(X_{m,n}) \cup f_3(X_{m+1,n}) \cup f_4(X_{m+1,n}) \right) \\
	&=& f_1(V_n) \cup f_2(V_n) \cup f_3(V_n) \cup f_4(V_n).
\end{eqnarray*}
Observe that $f_1 (V_n) \cap f_2(V_n) = \emptyset$ and $f_3 (V_n) \cap f_4(V_n) = \emptyset$.
Furthermore, if $(x,y) \in V_n$, then exactly one of the two elements $f_3(x,y)$ and $f_4(x,y)$ belongs to $f_1 (V_n) \cup f_2(V_n)$.
Indeed, if $(x,y)$ belongs to $V_n$, then so does $(\lfloor x/2\rfloor,y)$ and exactly one of the following two equalities is satisfied (depending on the parity of $x$):
\[
	f_3(x,y) = f_1(\lfloor x/2\rfloor,y)
	\quad \text{or} \quad 
	f_4(x,y) = f_2(\lfloor x/2\rfloor,y).
\]
We thus get $\# V_{n+1} = 3 \# V_n = 2 \cdot 3^n$, which concludes the proof.
\end{proof}

\begin{definition}\label{def:Un}
We will be interested in the sequence $(U_n)_{n\ge 0}$ of compact sets defined for all $n\ge 0$ by 
$$U_n:=\frac{T_n}{2^n}\subset [0,1]\times[0,1].$$
\end{definition}
In Figure~\ref{fig:U3}, we have depicted the sets $U_3$ and $U_4$. The set $U_9$ is depicted in Figure~\ref{fig:U9} given in the appendix.
\begin{figure}[h!tbp]
    \centering
    \includegraphics{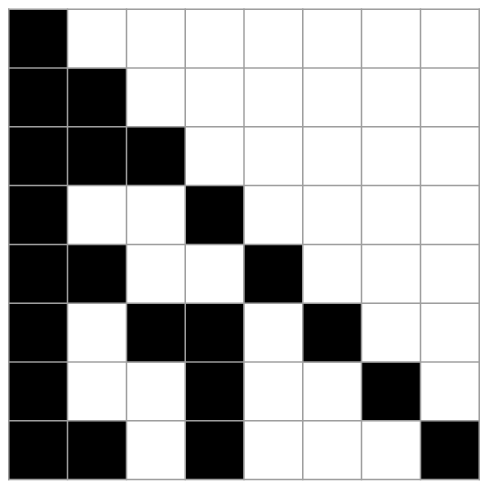}\quad \includegraphics{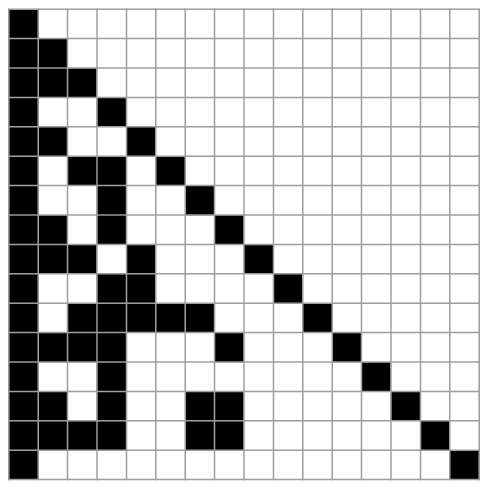}
    \caption{The sets $U_3$ and $U_4$.}
    \label{fig:U3}
\end{figure}

If $u=u_1\cdots u_n$ is a finite word over $\{0,1\}$, we make use of the following convention: $0.u$ has to be understood as the rational number $\sum_{i=1}^n u_i/2^i$.

\begin{remark}\label{rem:pixel} 
Each pair $(u,v)$ of words of length at most $n$ with an odd binomial coefficient gives rise to a square region in $U_n$. More precisely, we have the following situation. 
Let $n\ge 0$ and $u,v\in L_n$ such that $\binom{u}{v}\equiv 1\bmod{2}$. 
We have
$$(0.0^{n-|v|}v,0.0^{n-|u|}u)+Q/2^n\subset U_n$$
as depicted in Figure~\ref{fig:vizu}.
\begin{figure}[h!tbp]
    \centering
\begin{tikzpicture}[line cap=round,line join=round,>=triangle 45,x=1.0cm,y=1.0cm, scale=.6]
\fill[fill=black,fill opacity=0.8] (-1,0) -- (-1,-1) -- (0,-1) -- (0,0) -- cycle;
\draw [->] (-3,5) -- (-3,-4);
\draw [->] (-3,5) -- (6,5);
\draw (-3.94,-3.66) node[anchor=north west] {$u$};
\draw (6,5.5) node[anchor=north west] {$v$};
\draw (-3.49,6) node[anchor=north west] {$0$};
\draw (-3.59,-2.7) node[anchor=north west] {$1$};
\draw (4.7,6) node[anchor=north west] {$1$};
\draw [dash pattern=on 5pt off 5pt] (-3,-3)-- (5,-3);
\draw [dash pattern=on 5pt off 5pt] (5,-3)-- (5,5);
\draw (-1,0)-- (-1,-1);
\draw (-1,-1)-- (0,-1);
\draw (0,-1)-- (0,0);
\draw (0,0)-- (-1,0);
\draw [dash pattern=on 5pt off 5pt] (-3,0)-- (-1,0);
\draw [dash pattern=on 5pt off 5pt] (-1,0)-- (-1,5);
\draw (-1.4,6) node[anchor=north west] {$0.0^{n-|v|}v$};
\draw (-5.7,0.4) node[anchor=north west] {$0.0^{n-|u|}u$};
\end{tikzpicture}
\caption{Visualization of a square region in $U_n$.}
    \label{fig:vizu}
\end{figure}
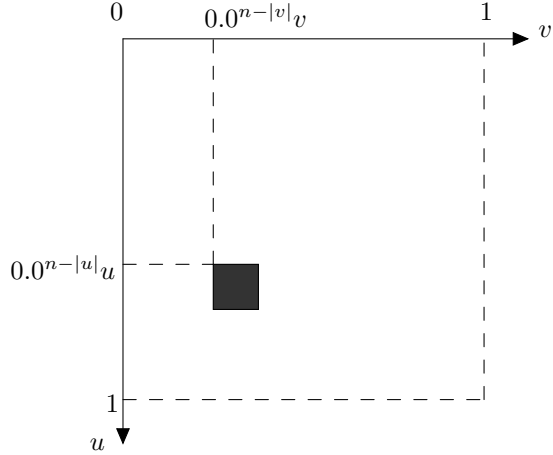
\end{remark}

We will consider the space $(\mathcal{H}(\mathbb{R}^2),d_h)$ of the non-empty compact subsets of $\mathbb{R}^2$ equipped with the Hausdorff metric $d_h$ induced by the Euclidean distance $d$ on $\mathbb{R}^2$. It is well known that $(\mathcal{H}(\mathbb{R}^2),d_h)$ is complete \cite{falconer}. We let $B(x,\epsilon)$ denote the open ball of radius $\epsilon$ centered at $x\in\mathbb{R}^2$ and we denote the $\epsilon$-fattening of a subset $S\subset\mathbb{R}^2$ by 
$$[S]_\epsilon:=\bigcup_{x\in S}B(x,\epsilon).$$

Our aim is to prove that the sequence $(U_n)_{n\ge 0}$ of compact subsets of $[0,1]\times[0,1]$ is converging and we provide an elementary description of the limit set $\mathcal{L}$.

\subsection{The $(\star)$ condition}
Some pairs of words $(u,v)\in L\times L$ have the property that not only $\binom{u}{v}\equiv 1\bmod{2}$ but also $\binom{uw}{vw}\equiv 1\bmod{2}$ for all words $w$. Such a property creates a particular pattern occurring in $U_n$ for all $n\ge |u|$.
\begin{definition}
    Let $(u,v)\in L\times L$. We say that $(u,v)$ satisfies the $(\star)$ condition, if $(u,v)\neq(\varepsilon,\varepsilon)$,
$$\binom{u}{v}\equiv 1\bmod{2},\ \binom{u}{v0}= 0 \text{ and }\binom{u}{v1}=0.$$
In particular, this condition implies that $|v|\le |u|$. 
\end{definition}

Note that if $(u,v)$ satisfies $(\star)$, then we clearly have 
\begin{equation}\label{eq:prolong}
    \binom{u}{vw}=0
\end{equation}
for all non-empty words $w$.

\begin{example}Some pairs $(u,v)$ satisfying $(\star)$:
    $$\begin{array}{r|rrrrr}
u&1&101&1001&1101&1110\\
\hline
v&1& 11&  11& 111&  10\\
\end{array}.$$
\end{example}

\begin{lemma}\label{lem:star-extension}
    If $(u,v)\in L\times L$ satisfies $(\star)$, then both $(u0,v0)$ and $(u1,v1)$ satisfy the $(\star)$ condition.
\end{lemma}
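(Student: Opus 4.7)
The statement is really just the observation that once a pair is ``frozen at $1$'' modulo $2$ and annihilates every one-letter extension of $v$, appending the same letter on both sides preserves the whole situation. The plan is therefore purely mechanical: apply Lemma~\ref{lem:binomial} to each of the six binomial coefficients that must be controlled, and read off the values from the hypothesis together with \eqref{eq:prolong}.

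Concretely, I would argue as follows. First, I note that the condition $(\star)$ forces $v\neq \varepsilon$: if $v$ were empty then $\binom{u}{v0}$ and $\binom{u}{v1}$ would count the occurrences of $0$ and $1$ in $u$, so they cannot both vanish unless $u=\varepsilon$, contradicting $(u,v)\neq(\varepsilon,\varepsilon)$. In particular $v$ starts with $1$, so $v0,v1\in L$, and of course $u0,u1\in L$ as well, so $(u0,v0)$ and $(u1,v1)$ are legitimate elements of $L\times L$ and are manifestly distinct from $(\varepsilon,\varepsilon)$.

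Next I would compute each needed value directly from Lemma~\ref{lem:binomial}. For $(u0,v0)$:
\begin{align*}
\binom{u0}{v0}&=\binom{u}{v0}+\binom{u}{v}\equiv 0+1\equiv 1\bmod 2,\\
\binom{u0}{v00}&=\binom{u}{v00}+\binom{u}{v0}=0+0=0,\\
\binom{u0}{v01}&=\binom{u}{v01}+\delta_{0,1}\binom{u}{v0}=0,
\end{align*}
where the zeros on the right are either direct hypotheses of $(\star)$ or instances of \eqref{eq:prolong} applied to non-empty extensions of $v$. The case of $(u1,v1)$ is entirely symmetric: Lemma~\ref{lem:binomial} gives $\binom{u1}{v1}=\binom{u}{v1}+\binom{u}{v}\equiv 1\bmod 2$, and both $\binom{u1}{v10}$ and $\binom{u1}{v11}$ reduce via the lemma to sums of coefficients $\binom{u}{vw}$ with $w$ non-empty, hence to $0$ by \eqref{eq:prolong} (plus, in one subcase, the vanishing of $\binom{u}{v1}$).

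There is no serious obstacle here; the only mild subtlety to flag is the check that $v\neq\varepsilon$, which is what allows us to claim that the new pairs still lie in $L\times L$. After that, the three defining clauses of $(\star)$ for each new pair follow by a one-line computation from Lemma~\ref{lem:binomial} and \eqref{eq:prolong}.
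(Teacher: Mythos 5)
Your proof is correct and follows essentially the same route as the paper: the congruence $\binom{u0}{v0}\equiv\binom{u}{v}\bmod 2$ comes from Lemma~\ref{lem:binomial}, and the vanishing of the extended coefficients comes from the fact that an occurrence of $v0a$ in $u0$ would force an occurrence of $v0$ in $u$ (which is exactly what your appeal to \eqref{eq:prolong} encodes, the paper phrasing it directly as a subsequence argument rather than via the recurrence). The observation that $(\star)$ forces $v\neq\varepsilon$, hence $v0,v1\in L$, is a small legitimate check the paper leaves implicit.
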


\begin{proof}
    The fact that $\binom{u0}{v0}\equiv 1\bmod{2}$ directly follows from Lemma~\ref{lem:binomial}. If $\binom{u0}{v00}>0$ or $\binom{u0}{v01}>0$ then, $v0$ must appears as a subsequence of $u$ contradicting the assumption.
\end{proof}

\begin{remark}
    Let $(u,v)$ satisfying $(\star)$ such that $|v|\le |u|=\ell$. From Remark~\ref{rem:pixel}, we know that $(0.0^{\ell-|v|}v,0.u)+Q/2^\ell\subset U_\ell$. As a consequence of Lemma~\ref{lem:star-extension}, $(u,v)$, $(u0,v0)$ and $(u1,v1)$ have an odd binomial coefficient and thus correspond to square regions in $U_{\ell+1}$, i.e.,
$$\{(0.0^{\ell+1-|v|}v,0.0u),(0.0^{\ell-|v|}v0,0.u0),(0.0^{\ell-|v|}v1,0.u1)\}+Q/2^{\ell+1}\subset U_{\ell+1}.$$
Iterating this argument yields, for all $n\ge 0$,
$$\bigcup_{|w|=0}^n (0.0^{\ell+n-|w|-|v|}vw,0.0^{n-|w|}uw)+Q/2^{\ell+n}\subset U_{\ell+n}.$$
As an example, consider the pair $(101,11)$ and the corresponding squares in $U_3, U_4$ and $U_5$ as depicted in Figure~\ref{fig:squares}.
\begin{figure}[h!tbp]
    \centering
    \scalebox{.5}{\includegraphics{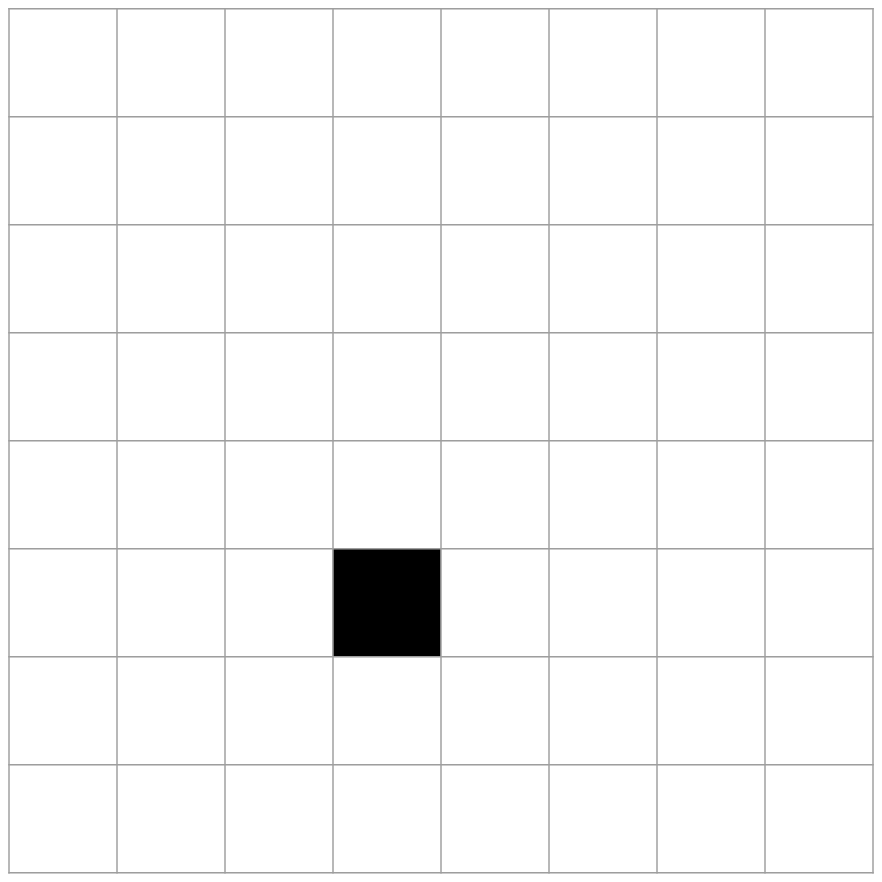}\quad \includegraphics{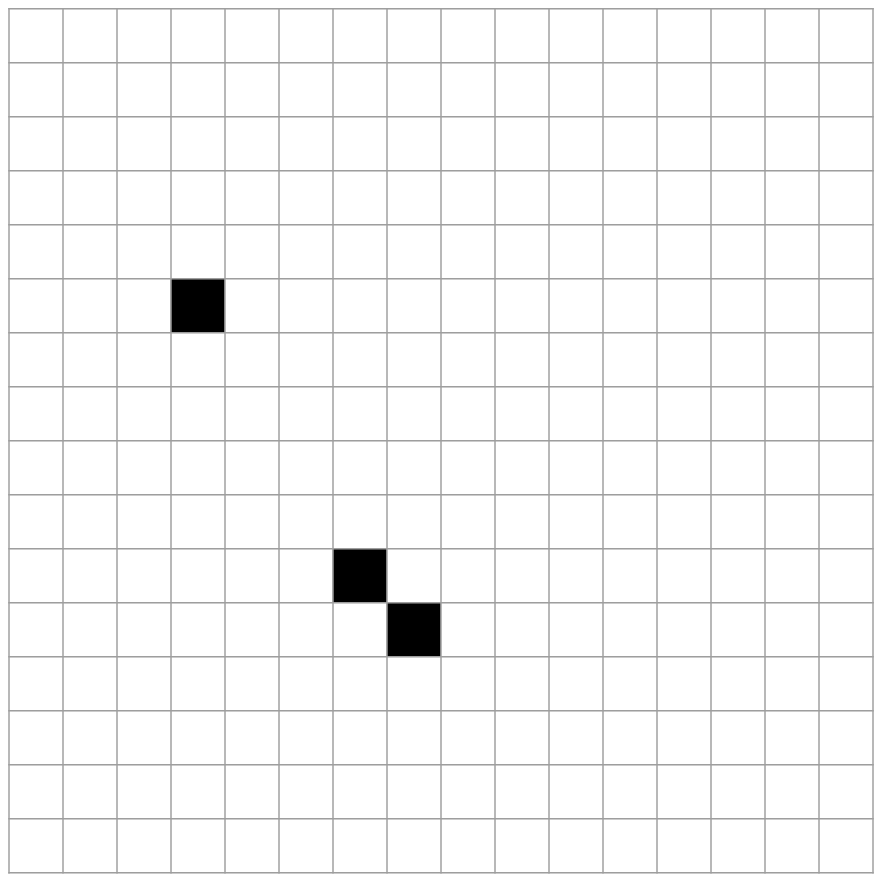}\quad \includegraphics{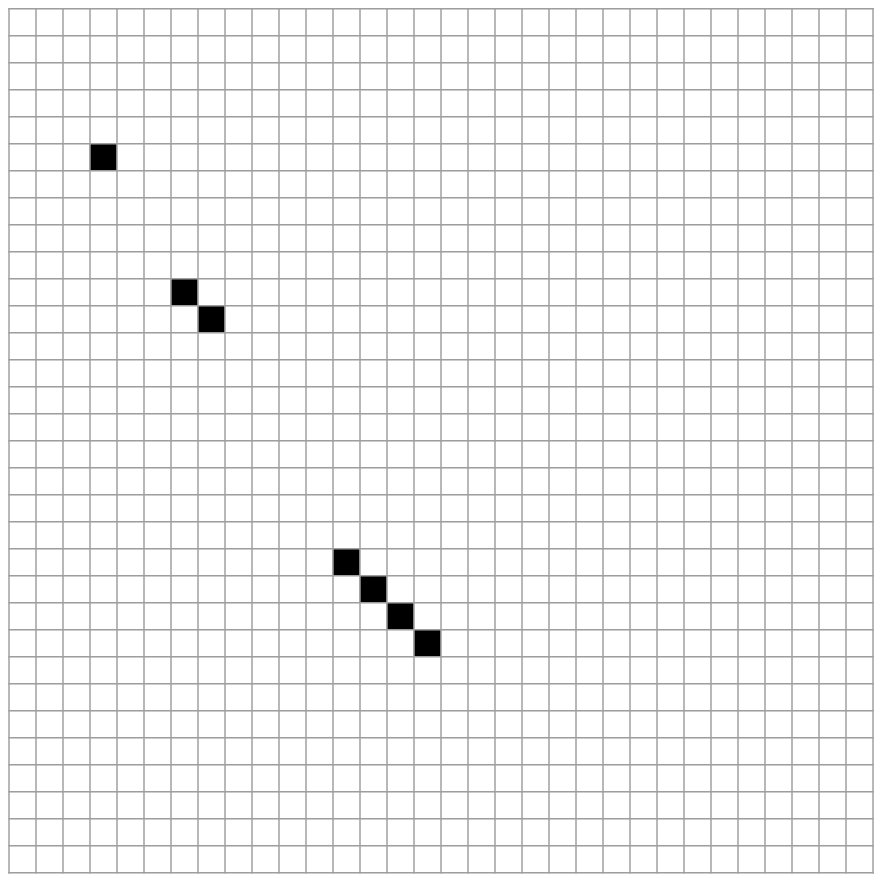}}
    \caption{The pair $(101,11)$ satisfying $(\star)$ in $U_3,U_4,U_5$.}
    \label{fig:squares}
\end{figure}
\end{remark}

\subsection{Definition of an initial set $\mathcal{A}_0$}
Observe that, for $(u,v)$ satisfying $(\star)$, the sequence
$$\left(((0.0^{|u|-|v|}v,0.u)+Q/2^{|u|}) \cap U_n\right)_{n\ge 0}$$
converges to the diagonal of the square $(0.0^{|u|-|v|}v,0.u)+Q/2^{|u|}$. See for instance, in Figure~\ref{fig:squares}, the square region of size $1/8$ and upper-left corner $(3/8,5/8)$: in $U_3$ we have one black square, which is divided into two black squares of size $1/16$ in $U_4$ and then, four black squares of size $1/32$ in $U_5$. This observation leads us to the definition of the set $\mathcal{A}_0$.

\begin{definition}\label{def:seg}
Let $(u,v)$ in $L\times L$ such that $|u|\ge|v|\ge 1$. We define a closed segment $S_{u,v}$ of slope $1$ and length $\sqrt{2}\cdot 2^{-|u|}$ in $[0,1]\times [1/2,1]$. The endpoints of $S_{u,v}$ are given by 
$$A_{u,v}:=(0.0^{|u|-|v|}v,0.u) \text{ and } B_{u,v}:=(0.0^{|u|-|v|}v+2^{-|u|},0.u+2^{-|u|}).$$
 Note that if we allow infinite binary expansions ending with ones, we have
$$B_{u,v}:=(0.0^{|u|-|v|}v111\cdots,0.u111\cdots).$$
Observe that $S_{u,v}$ is included in $[1/2^{|u|-|v|+1},1/2^{|u|-|v|}]\times[1/2,1]$.
\end{definition}

First, let us discuss relative positions of two segments of the form $S_{u,v}$ and in particular, we explain when $S_{s,t}\subset S_{u,v}$.

\begin{lemma}
Let $(u,v)$ and $(s,t)$ in $L\times L$ such that $|u|\ge|v|\ge 1$ and $|s|\ge|t|\ge 1$. The point $A_{s,t}$ belongs to $A_{u,v}+[0,2^{-|u|})\times [0,2^{-|u|})$ if and only if there exist two words $w,z$ of the same length such that $s=uw$ and $t=vz$.
\end{lemma}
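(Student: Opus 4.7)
The key reduction is the elementary fact that, for any two words $y$ and $x$ with $|x|\ge|y|$, the inequality $0.y\le 0.x<0.y+2^{-|y|}$ holds if and only if $y$ is a prefix of $x$. Indeed, the interval $[0.y,0.y+2^{-|y|})$ is the half-open dyadic interval $[\val_2(y)\cdot 2^{-|y|},(\val_2(y)+1)\cdot 2^{-|y|})$, which contains $0.x$ exactly when the first $|y|$ binary digits of $x$ spell $y$. The whole lemma reduces to applying this fact to each of the two coordinates.

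The \emph{if} direction is then a direct computation. If $s=uw$ and $t=vz$ with $|w|=|z|$, then $|s|-|t|=|u|-|v|$, and one checks that
\[
A_{s,t}-A_{u,v}=\bigl(2^{-|u|}\cdot 0.z,\ 2^{-|u|}\cdot 0.w\bigr),
\]
which lies in $[0,2^{-|u|})\times[0,2^{-|u|})$ because $0.w,0.z\in[0,1)$.

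For the \emph{only if} direction, I would apply the key fact to each coordinate (working under the natural length assumption $|s|\ge|u|$ that is implicit in the subsequent discussion of when $S_{s,t}\subset S_{u,v}$). The second coordinate inequality immediately gives $s=uw$ for some word $w$ of length $|s|-|u|$. Applied to the first coordinate, the same fact shows that the length-$|u|$ word $0^{|u|-|v|}v$ is a prefix of the length-$|s|$ word $0^{|s|-|t|}t$. The main subtlety, and essentially the only real content, is the leading-zero alignment: since $v_1=t_1=1$, any mismatch between $|s|-|t|$ and $|u|-|v|$ would place a letter $1$ opposite a letter $0$ in this prefix comparison. Hence $|s|-|t|=|u|-|v|$, and comparing the remaining characters yields $t=vz$ with $|z|=|t|-|v|=|s|-|u|=|w|$, as required.
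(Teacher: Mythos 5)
Your proof is correct and follows essentially the same route as the paper's, which is a two-line appeal to base-$2$ expansions: you simply make explicit the dyadic-interval/prefix argument and the leading-zero alignment forcing $|s|-|t|=|u|-|v|$. The extra hypothesis $|s|\ge|u|$ you flag is genuinely needed and not merely a convenience --- without it the statement as written fails (e.g.\ $u=v=10$, $s=t=1$ gives $A_{s,t}=A_{u,v}=(1/2,1/2)$, hence $A_{s,t}\in A_{u,v}+[0,2^{-|u|})\times[0,2^{-|u|})$, yet $s$ is not of the form $uw$) --- so on this point you are being more careful than the paper.
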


\begin{proof}
    It follows from base-$2$ expansions of integers. Observe that, if $|w|\neq |z|$, then $0.0^{|u|-|v|+|w|-|z|}vz$ does not belong to the interval $[0.0^{|u|-|v|}v,0.0^{|u|-|v|}v+2^{-|u|})$.
\end{proof}

\begin{corollary}
    Let $(u,v)$ and $(s,t)$ satisfying $(\star)$. If the point $A_{s,t}$ belongs to $A_{u,v}+[0,2^{-|u|})\times [0,2^{-|u|})$, then $S_{s,t}\subset S_{u,v}$.
\end{corollary}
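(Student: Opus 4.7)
The plan is to reduce $S_{s,t} \subset S_{u,v}$ to the word-level identity $w = z$, where $w, z$ come from the preceding lemma, and then to establish that identity via the convolution formula of Lemma~\ref{lem:binomial2}. The preceding lemma supplies $s = uw$ and $t = vz$ with $|w| = |z|$. A direct computation using $|s| - |t| = |u| - |v|$ gives
$$A_{s,t} - A_{u,v} = 2^{-|u|}\bigl(0.z,\, 0.w\bigr),$$
so $A_{s,t}$ lies on the slope-$1$ line through $A_{u,v}$ (which carries $S_{u,v}$) exactly when $w = z$. Granting $w = z$, the endpoints $A_{s,t}$ and $B_{s,t}$ have offsets $2^{-|u|}\cdot 0.w$ and $2^{-|u|}\cdot 0.w + 2^{-|s|} = 2^{-|u|}(0.w + 2^{-|w|})$ along $S_{u,v}$; both lie in $[0, 2^{-|u|}]$ since $0.w + 2^{-|w|} \le 1$, and therefore $S_{s,t} \subset S_{u,v}$. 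Thus the whole statement reduces to proving $w = z$.

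To prove $w = z$, I would apply Lemma~\ref{lem:binomial2} to write
$$\binom{uw}{vz} = \sum_{v_1 v_2 = vz} \binom{u}{v_1}\binom{w}{v_2}$$
and split the sum according to the length of $v_1$. If $|v_1| < |v|$, then $v_2$ contains the missing suffix of $v$ followed by $z$, so $|v_2| > |z| = |w|$ and $\binom{w}{v_2} = 0$. If $|v_1| > |v|$, then $v_1 = vz'$ for some non-empty prefix $z'$ of $z$, and the extinction identity~\eqref{eq:prolong} (which follows from $(\star)$ for $(u,v)$) forces $\binom{u}{v_1} = 0$. Only the term $v_1 = v$, $v_2 = z$ survives, yielding the clean factorization
$$\binom{uw}{vz} = \binom{u}{v}\binom{w}{z}.$$
Since $|w| = |z|$, the coefficient $\binom{w}{z}$ equals $1$ when $w = z$ and $0$ otherwise; since $(s,t)$ satisfies $(\star)$, the left-hand side is positive, so $\binom{w}{z} \neq 0$ and $w = z$ follows.

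The main obstacle is precisely the collapse of this convolution sum to a single surviving term. That collapse is driven by the combination of two ingredients: the length constraint $|w| = |z|$, which annihilates every term indexed by a short $v_1$, and the extinction property \eqref{eq:prolong} for $(u,v)$, which annihilates every term indexed by a long $v_1$. Once the multiplicative identity $\binom{uw}{vz} = \binom{u}{v}\binom{w}{z}$ is in place, deducing $w = z$ and then transferring to the geometric containment $S_{s,t} \subset S_{u,v}$ is routine.
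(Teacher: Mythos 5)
Your proposal is correct and follows essentially the same route as the paper: both invoke the preceding lemma to obtain $s=uw$, $t=vz$ with $|w|=|z|$, collapse the convolution sum of Lemma~\ref{lem:binomial2} to the single term $\binom{u}{v}\binom{w}{z}$ using the length constraint on one side and the extinction property~\eqref{eq:prolong} on the other, and then deduce $w=z$ from the nonvanishing of $\binom{s}{t}$. The only difference is that you spell out the final geometric step (offsets along the diagonal), which the paper leaves implicit.
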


\begin{proof}
    We know that there exist two words $w,z$ of the same length such that $s=uw$ and $t=vz$. From Lemma~\ref{lem:binomial2}, we get
$$\binom{s}{t}=\binom{uw}{vz}=\sum_{\substack{f,g\in L\\ vz=fg}}\binom{u}{f}\binom{w}{g}.$$
If $|f|>|v|$, then $v$ is a strict prefix of $f$ and thus $\binom{u}{f}=0$ applying \eqref{eq:prolong}. If $|f|<|v|$, then $z$ is a strict suffix of $g$. In particular, in this case $|w|<|g|$ and thus $\binom{w}{g}=0$. We conclude that 
$$\binom{s}{t}=\binom{u}{v}\binom{w}{z}.$$
Since $(s,t)$ satisfies $(\star)$, we have $w=z$ and the conclusion follows.
\end{proof}

\begin{corollary}\label{cor:seg_max}
    Let $(u,v)$ and $(s,t)$ satisfying $(\star)$. Considering the two segments $S_{u,v}$ and $S_{s,t}$, either one is included into the other or, $S_{u,v}\cap S_{s,t}$ is empty or reduced to a common endpoint.
\end{corollary}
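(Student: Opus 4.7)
The strategy is to exploit the fact that $S_{u,v}$ and $S_{s,t}$ are segments of slope exactly $1$ whose endpoints have dyadic coordinates. I would first write down the affine line supporting each segment: $S_{u,v}$ lies on $\{y-x = 0.u - 0.0^{|u|-|v|}v\}$ and $S_{s,t}$ lies on the analogous line. If these two supporting lines differ, then being parallel lines of slope $1$ they are disjoint, so $S_{u,v}\cap S_{s,t}=\emptyset$ and the conclusion holds in that case. The nontrivial situation is therefore the one in which both segments lie on a common slope-$1$ line.

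In that common-line case I would project onto the first coordinate: the image of $S_{u,v}$ is the closed interval $[0.0^{|u|-|v|}v,\,0.0^{|u|-|v|}v+2^{-|u|}]$ whose left endpoint is an integer multiple of $2^{-|u|}$, i.e., a dyadic interval of level $|u|$; similarly the projection of $S_{s,t}$ is a dyadic interval of level $|s|$. The elementary but crucial structural fact I invoke is that two dyadic intervals are either nested or have pairwise disjoint interiors. Assuming $|u|\le |s|$ without loss of generality, this splits the remaining analysis into two sub-cases.

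If the projections are not nested, then they share at most one point, and since both segments lie on a common slope-$1$ line this forces the segments themselves to meet in at most a common endpoint, matching the conclusion. If they are nested, so that the projection of $S_{s,t}$ is contained in that of $S_{u,v}$, then when $|s|>|u|$ the segment $S_{s,t}$ is strictly shorter, so the left endpoint of its projection lies strictly to the left of the right endpoint of $S_{u,v}$'s projection; the common slope-$1$ line propagates this strict inequality to the second coordinate, placing $A_{s,t}$ in the half-open box $A_{u,v}+[0,2^{-|u|})\times[0,2^{-|u|})$, and the preceding corollary then yields $S_{s,t}\subset S_{u,v}$. The boundary case $|s|=|u|$ collapses the two projections to a single dyadic interval, giving $S_{s,t}=S_{u,v}$. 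The main piece of bookkeeping, and effectively the only obstacle, is this separation of the $|s|=|u|$ case, needed to turn a closed containment of projections into the half-open hypothesis required by the preceding corollary.
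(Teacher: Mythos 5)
Your argument is correct, and it is essentially the derivation the paper intends: the paper states this corollary without proof, as an immediate consequence of the preceding lemma and corollary, and your write-up simply makes that implicit reasoning explicit (parallel slope-$1$ lines, the nested-or-disjoint dichotomy for dyadic intervals, and the half-open box hypothesis feeding into the preceding corollary). The one point you rightly isolate --- converting closed containment of projections into the half-open condition, with the $|s|=|u|$ case treated separately --- is handled correctly.
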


\begin{definition}
Let us define the following compact set which is the closure of a countable union of segments
$$\mathcal{A}_0:=\overline{\bigcup_{\substack{(u,v)\\ \text{satisfying} (\star)}} S_{u,v}}.$$
Notice that Definition~\ref{def:seg} implies that $\mathcal{A}_0 \subset [0,1]\times[1/2,1]$.
\end{definition}

\begin{example}
In Figure~\ref{fig:A0}, each pair $(u,v)$ satisfying $(\star)$ with $|u|\le 6$ corresponds to a point. We have represented all the corresponding segments $S_{u,v}$. Corollary~\ref{cor:seg_max} shows that the set of segments has a partial ordering for inclusion, the maximal segments are the diagonals of the dotted squares. For instance, consider the gray square whose upper-left corner corresponds to $(1101,111)$ satisfying $(\star)$. Its diagonal is the segment $S_{1101,111}$ with origin $(7/16,13/16)$ which has length $\sqrt{2}/16$ and contains the four segments $S_{1101w,111w}$ of length $\sqrt{2}/64$ for $|w|=2$.
    \begin{figure}[h!tbp]
        \centering
        \rotatebox{270}{\scalebox{.43}{\includegraphics{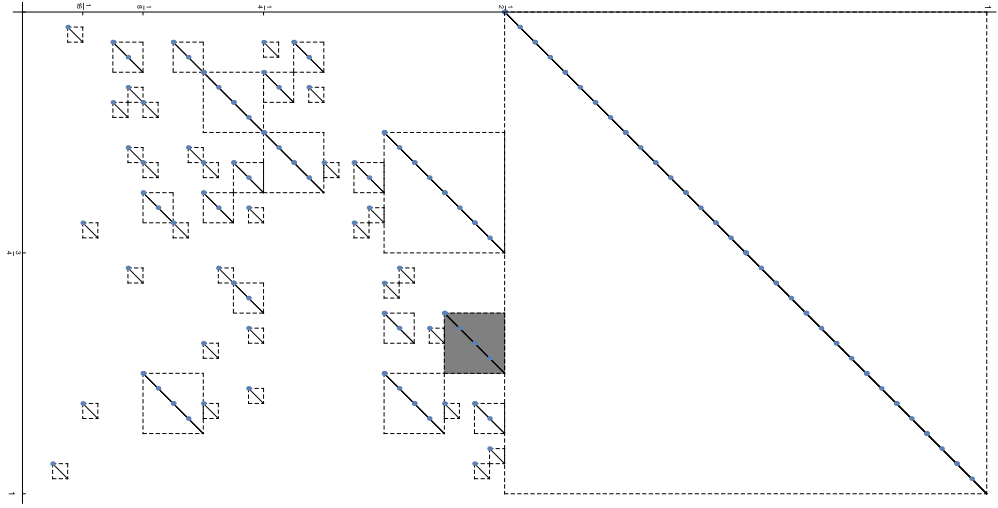}}}
        \caption{An approximation of $\mathcal{A}_0$ computed with words of length $\le 6$.}
        \label{fig:A0}
    \end{figure}
\end{example}

\begin{remark}
In the definition of $\mathcal{A}_0$, we take the closure of a union to ensure the compactness of the set. Here is an example of a limit point that does not belong to the union of segments but to the closure $\mathcal{A}_0$. 
For all $n\ge 0$ and all $r\in \{0,1,\ldots,7\}$, the pair $(10^{8n+4+r}1,10^{8n+r}1)$ satisfies $(\star)$ if and only if $0\le r\le 3$. Indeed, observe that 
$$\binom{10^{8n+4+r}1}{10^{8n+r}1}=\binom{8n+4+r}{8n+r}.$$
Now, we use Theorem~\ref{thm:lucas} and first assume that $0\le r\le 3$. Let $\rep_2(n)=n_k\cdots n_0$ and $\val_2(r_1r_0)=r$ with $r_0,r_1\in\{0,1\}$, then 
$$\binom{8n+4+r}{8n+r}= \binom{n_k}{n_k}\cdots  \binom{n_0}{n_0}\binom{1}{0}\binom{r_1}{r_1}\binom{r_0}{r_0}\equiv 1\bmod{2}.$$
It is easy to see that $(10^{8n+4+r}1,10^{8n+r}1)$ satisfies the other two conditions of $(\star)$. If $7 \ge r\ge 4$, when applying Theorem~\ref{thm:lucas}, the corresponding product contains a factor $\binom{0}{1}$ and the result is thus even. 
Consequently, we have a sequence of segments in $\mathcal{A}_0$ with one endpoint being of the form $(1/32+1/2^m,1/2+1/2^m,)$ with $m\ge 6$  
and the point $(1/32,1/2)$ is an accumulation point of $\mathcal{A}_0$. In Figure~\ref{fig:limit}, we have represented the segments corresponding to $n=0,1$ and $r=0,\ldots,3$.
\begin{figure}[h!t]
    \centering
    \rotatebox{270}{\scalebox{.5}{\includegraphics{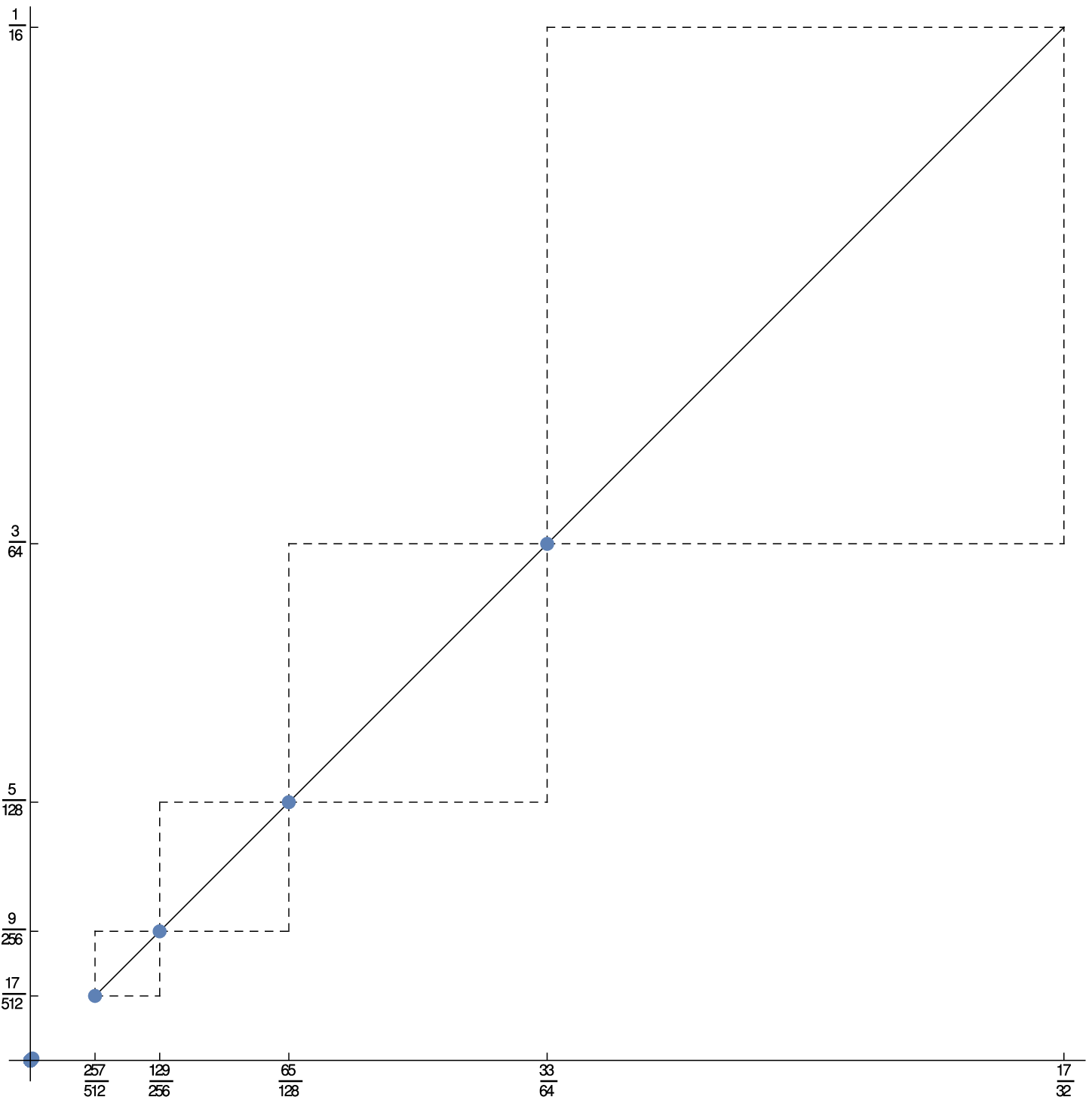}}}
\rotatebox{270}{\scalebox{.5}{\includegraphics{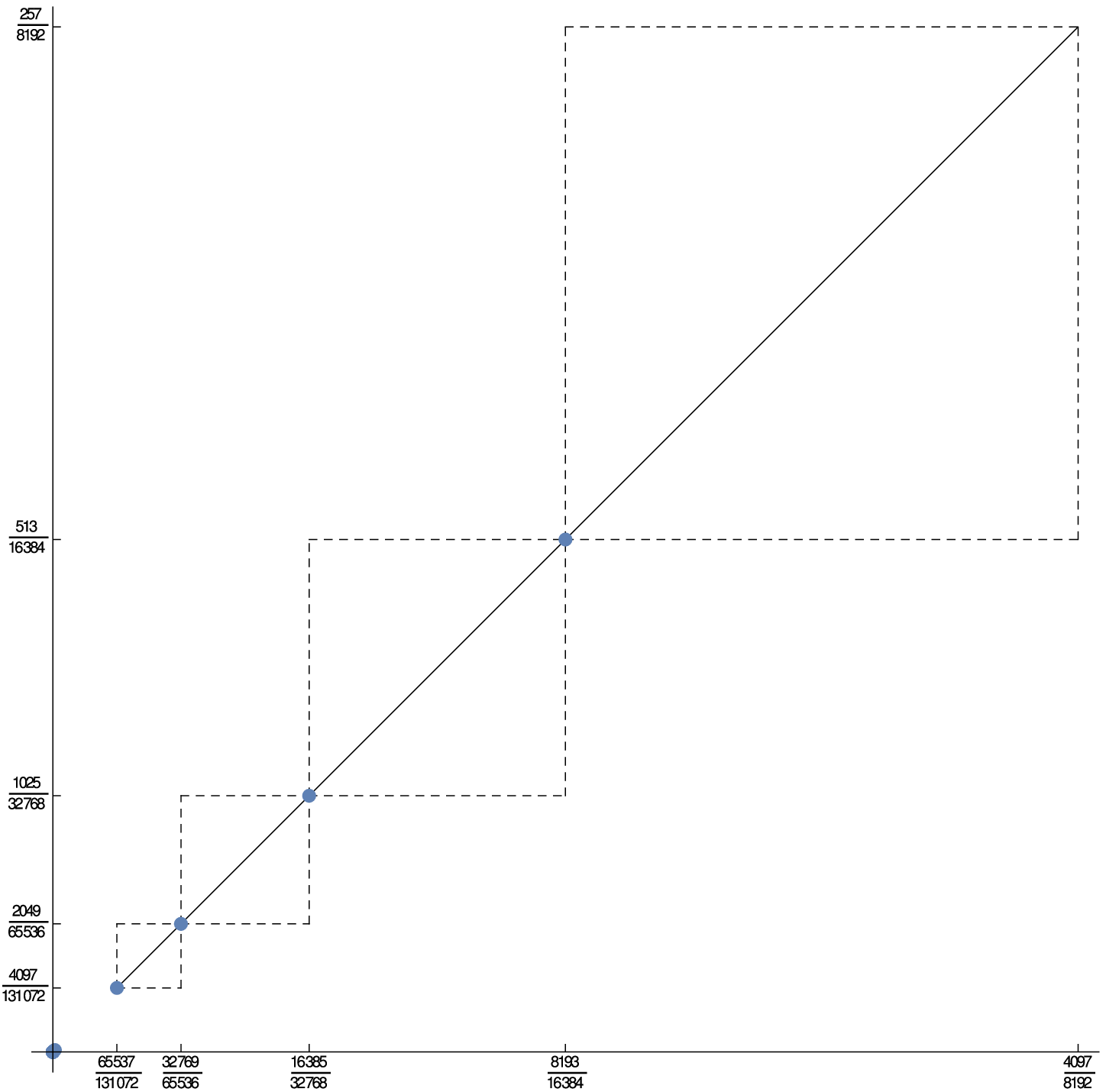}}}
    \caption{A zoom on $\mathcal{A}_0$ in $[17/2^9,1/2^4]\times [27/2^9,17/2^5]$ and in the smaller area $[4097/2^{17},257/2^{13}]\times [65537/2^{17},4097/2^{13}]$.}
    \label{fig:limit}
\end{figure}
\end{remark}

\subsection{Definition of the sequence $(\mathcal{A}_n)_{n\in\mathbb{N}}$}

In the following definition, we introduce another sequence of compact sets obtained by transforming $\mathcal{A}_0$ under iterations of two maps. This new sequence, which is shown to be a Cauchy sequence, will allow us to define properly the limit set $\mathcal{L}$. 

\begin{definition}
We let $c$ denote the homothety of center $(0,0)$ and ratio $1/2$ and consider the map $h:(x,y)\mapsto (x,2y)$. 
Now define a sequence of compact sets 
$$\mathcal{A}_n:=\bigcup_{\substack{0\le i\le n\\ 0\le j\le i}} h^j(c^i(\mathcal{A}_0)).$$
\end{definition}

Let $m,n$ with $m\leq n$. Using Figure~\ref{fig:hc}, observe that
\begin{equation}
    \label{eq:stabilisation}
    \mathcal{A}_m\cap ([1/2^{m+1},1]\times [0,1])=\mathcal{A}_n\cap ([1/2^{m+1},1]\times [0,1]).
\end{equation}
\begin{figure}[h!t]
    \centering
    {\psfrag{A0}{$\mathcal{A}_0$}\psfrag{cA0}{$c(\mathcal{A}_0)$}\psfrag{hcA0}{$h(c(\mathcal{A}_0))$}\psfrag{c2A0}{$c^2(\mathcal{A}_0)$}\psfrag{hc2A0}{$h(c^2(\mathcal{A}_0))$}\psfrag{h2c2A0}{$h^2(c^2(\mathcal{A}_0))$}\psfrag{h}{$h$}\psfrag{c}{$c$}\psfrag{0}{$0$}\psfrag{1}{$1$}
\includegraphics{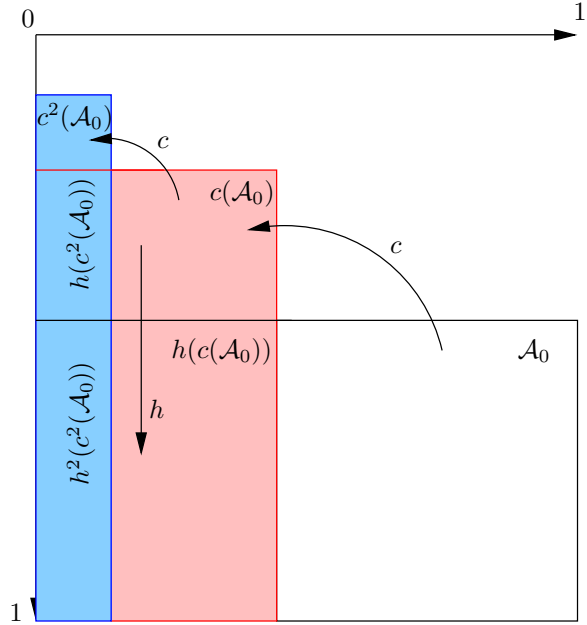}}
    \caption{Two applications of $c$ and $h$ from $\mathcal{A}_0$.}
    \label{fig:hc}
\end{figure}

\begin{example}\label{exa:32}
In Figure~\ref{fig:a3}, we have depicted two original segments in $\mathcal{A}_0$ (in black), then one application of $c$ possibly followed by $h$ (in red), then a second application of $c$ followed by at most $2$ applications of $h$ (in blue). 
    \begin{figure}[h!tbp]
        \centering
        \rotatebox{270}{\scalebox{.4}{\includegraphics{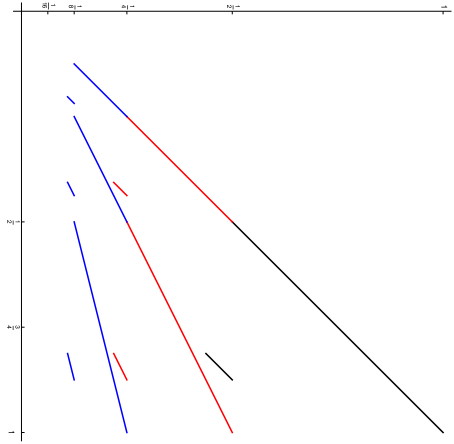}}}
        \caption{A subset of $\mathcal{A}_2$.}
        \label{fig:a3}
    \end{figure}
For instance, the segment $S_{1,1}$ with endpoints $(1/2,1/2)$ and $(1,1)$ belongs to $\mathcal{A}_0$. Thus, the segment $h^p(c^{p+j}(S_{1,1}))$ with endpoints $(1/2^{p+j+1},1/2^{j+1})$ and  $(1/2^{p+j},1/2^{j})$ belongs to $\mathcal{A}_{p+j}$ for all $p\ge 0$ and all $j\ge 0$. Since $\mathcal{A}_p\subset \cdots \subset \mathcal{A}_{2p}$, considering the union of segments of the latter form for $j=0,\ldots,p$, the segment with endpoints $(1/2^{2p+1},1/2^{p+1})$ and $(1/2^{p},1)$ belongs to $\mathcal{A}_{2p}$.
\end{example}

\begin{lemma}\label{lem:cauchy_seq}
    The sequence $(\mathcal{A}_n)_{n\ge 0}$ is a Cauchy sequence.
\end{lemma}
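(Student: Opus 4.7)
The plan is to exploit the obvious monotonicity $\mathcal{A}_m \subseteq \mathcal{A}_n$ for $m \leq n$, which is immediate from the definition since the union defining $\mathcal{A}_n$ contains every term of the union defining $\mathcal{A}_m$. This reduces the Hausdorff distance to a one-sided supremum: $d_h(\mathcal{A}_m,\mathcal{A}_n) = \sup_{y \in \mathcal{A}_n} d(y,\mathcal{A}_m)$. So it suffices to exhibit, for every $y \in \mathcal{A}_n$, a companion point $y' \in \mathcal{A}_m$ with $d(y,y') \leq C/2^m$ for some constant $C$ independent of $n$.

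A generic point $y \in \mathcal{A}_n$ has the form $y = h^j(c^i(z))$ with $z=(z_1,z_2) \in \mathcal{A}_0$ and $0 \leq j \leq i \leq n$, whence $y = (z_1/2^i,\, z_2/2^{i-j})$. If $i \leq m$ then $y$ already belongs to $\mathcal{A}_m$ and there is nothing to do, so assume $i > m$. I would then split on whether $j \geq i-m$ or $j < i-m$. In the first case, set $y' := h^{j-(i-m)}(c^m(z))$; the exponent $j-(i-m)$ lies in $[0,m]$ because $j \geq i-m$ and $j \leq i$, so $y' \in \mathcal{A}_m$, and a direct computation shows that $y'$ and $y$ share the same second coordinate while their first coordinates differ by at most $1/2^m$. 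In the second case, set $y' := c^m(z) \in \mathcal{A}_m$; then both coordinate differences are bounded by $1/2^m$, using the inequality $i-j > m$ to control the vertical one.

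In either subcase $d(y,\mathcal{A}_m) \leq \sqrt{2}/2^m$, hence $d_h(\mathcal{A}_m,\mathcal{A}_n) \leq \sqrt{2}/2^m$ for all $m \leq n$, which immediately yields the Cauchy property. The only delicate point is the index arithmetic in the first subcase, which ensures that $y'$ really is one of the terms in the union defining $\mathcal{A}_m$; the remaining estimates are routine. As a consistency check, the bound $y_1 = z_1/2^i \leq 1/2^{m+1}$ for $i > m$ matches exactly the stabilisation relation \eqref{eq:stabilisation} quoted just above the lemma.
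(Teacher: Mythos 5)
Your proof is correct, and it takes a genuinely different route from the paper's. The paper splits $[0,1]^2$ into the half-plane $x\ge 1/2^{m+1}$, where $\mathcal{A}_m$ and $\mathcal{A}_n$ agree exactly by the stabilisation relation \eqref{eq:stabilisation}, and the strip $x<1/2^{m+1}$, which it covers by the $\epsilon$-fattening of one long segment of $\mathcal{A}_m$ running from $(1/2^{m+1},1/2^{(m/2)+1})$ to $(1/2^{m/2},1)$; that segment is manufactured in Example~\ref{exa:32} from the fact that $(1,1)$ satisfies $(\star)$, so $S_{1,1}\subset\mathcal{A}_0$. You instead match each point $h^j(c^i(z))$ of $\mathcal{A}_n$ with the point $h^{j'}(c^m(z))$ of $\mathcal{A}_m$ obtained by truncating the exponents (with $j'=j-(i-m)$ or $j'=0$ according to the two subcases), and your index arithmetic and coordinate estimates check out: in the first subcase the second coordinates $z_2/2^{i-j}$ coincide and the first differ by $z_1(2^{-m}-2^{-i})\le 2^{-m}$, and in the second subcase both differences are at most $2^{-m}$ since $i-j>m$. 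Your argument buys two things. First, it gives the explicit quantitative bound $d_h(\mathcal{A}_m,\mathcal{A}_n)\le\sqrt{2}\,2^{-m}$ rather than a bare Cauchy statement. Second, and more significantly, it uses nothing about $\mathcal{A}_0$ beyond $\emptyset\ne\mathcal{A}_0\subset[0,1]^2$, so it transfers verbatim to the modulo-$p$ setting of Section~\ref{sec:5}; the paper's original argument does not, because $(1,1)$ satisfies $(\star)_r$ only for $r=1$, which is precisely why the paper must supply the separate ``generalized'' proof of Lemma~\ref{lem:cauchy_seq} in that section. What the paper's route offers in exchange is the sharper local information that the two sets coincide exactly on $[1/2^{m+1},1]\times[0,1]$, which is reused elsewhere in the discussion of $\mathcal{L}$.
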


\begin{proof}
Let $\epsilon>0$. Take $N$ such that $1/2^{N/2}<\epsilon$. Let $n>m>N$. From \eqref{eq:stabilisation},  $[\mathcal{A}_m]_\epsilon$ contains $\mathcal{A}_n \cap ([1/2^{m+1},1]\times [0,1])$. Assume first that $m$ is even. 
Since $\mathcal{A}_m$ contains the segment with endpoints $(1/2^{m+1},1/2^{(m/2)+1})$ and $(1/2^{m/2},1)$ (details are given in the previous example), then $[\mathcal{A}_m]_\epsilon$ contains $[0,1/2^{m+1}) \times [0,1]$. If $m$ is odd, $\mathcal{A}_{m-1}$ and also $\mathcal{A}_m$ contain the segment with endpoints $(1/2^{m},1/2^{((m-1)/2)+1})$ and $(1/2^{(m-1)/2},1)$ and the same conclusion follows. Thus $[\mathcal{A}_m]_\epsilon$ contains $\mathcal{A}_n$. Since $\mathcal{A}_m\subset\mathcal{A}_n$, we directly have that $[\mathcal{A}_n]_\epsilon$ contains $\mathcal{A}_m$.
\end{proof}

\begin{definition} 
Since we have a Cauchy sequence in the complete metric space $(\mathcal{H}(\mathbb{R}^2),d_h)$, the limit of $(\mathcal{A}_n)_{n\ge 0}$ is a well defined compact set denoted by $\mathcal{L}$. An approximation of this set is given in Figure~\ref{fig:approx} in the appendix.
\end{definition}

\section{Convergence of $(U_n)_{n\ge 0}$ to $\mathcal{L}$}\label{sec:4}
Now we will show that the sequence $(U_n)_{n\ge 0}$ of compact subsets of $[0,1]\times [0,1]$ converges to $\mathcal{L}$. The first part is to show that, when $\epsilon$ is a positive real number, then $U_n\subset [\mathcal{L}]_\epsilon$ for all sufficiently large $n$. 

\begin{lemma}\label{lem:first_inclusion} Let $\epsilon > 0$. For all $n\in\mathbb{N}$ such that $2^{-n+1}<\epsilon$, we have
    $$U_n\subset [\mathcal{L}]_\epsilon.$$
\end{lemma}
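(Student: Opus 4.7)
The plan is to prove the stronger statement that for every pair $(u, v) \in L_n \times L_n$ with $\binom{u}{v} \equiv 1 \pmod 2$, the corresponding pixel $P_{u,v} := (\val_2(v)/2^n, \val_2(u)/2^n) + Q/2^n$ contains a point of $\mathcal{L}$. Since each pixel has diameter $\sqrt{2}/2^n < 2^{-n+1} < \epsilon$, this immediately yields $P_{u,v} \subset [\mathcal{L}]_\epsilon$, and $U_n \subset [\mathcal{L}]_\epsilon$ follows by taking the union over the finitely many contributing pairs.

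To exhibit such a point of $\mathcal{L}$ inside $P_{u,v}$, I would construct a pair $(u^*, v^*) \in L \times L$ satisfying the condition $(\star)$ together with integers $0 \le j \le i$ such that $h^j(c^i(A_{u^*, v^*}))$ lies in $P_{u,v}$. Since $A_{u^*, v^*} \in S_{u^*, v^*} \subset \mathcal{A}_0$, the image belongs to $h^j(c^i(\mathcal{A}_0)) \subset \mathcal{A}_i \subset \mathcal{L}$. A direct calculation shows that this image equals $(\val_2(v^*)/2^{|u^*|+i}, \val_2(u^*)/2^{|u^*|+i-j})$, reducing the problem to choosing the quadruple $(u^*, v^*, i, j)$ so that this point lands in the pixel.

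Three easy sub-cases dispose of most of the work. First, if $(u, v)$ itself satisfies $(\star)$, then $(u^*, v^*) = (u, v)$ with $i = n - |u|$ and $j = 0$ places the lower-left corner of $P_{u,v}$ in $\mathcal{L}$. Second, if $v = \varepsilon$ and $u \ne \varepsilon$, then $(u, u)$ satisfies $(\star)$ (since $\binom{u}{u} = 1$ and $\binom{u}{ua} = 0$ for any letter $a$), and the choice $i = n$, $j = |u|$ works. Finally, the pair $(\varepsilon, \varepsilon)$ is handled by the staircase observation in Example~\ref{exa:32}: letting $p \to \infty$ shows that $(0, 0)$ is an accumulation point of $\bigcup_p \mathcal{A}_{2p}$, hence $(0, 0) \in \mathcal{L}$, and this point lies in $P_{\varepsilon, \varepsilon} = [0, 1/2^n]^2$.

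The main obstacle lies in the remaining case where $(u, v)$ does not satisfy $(\star)$. A first attempt is the greedy extension: if $\binom{u}{va}$ is odd for some $a \in \{0, 1\}$, replace $v$ by $va$ and iterate. This procedure can stall prematurely, for instance when $(u, v) = (11010, 10)$, since both $\binom{11010}{100}$ and $\binom{11010}{101}$ are even but nonzero. To handle such pairs, I would prove that there always exists $v^* \in L$ with $\val_2(v^*) \in [\val_2(v) \cdot 2^e, (\val_2(v)+1) \cdot 2^e]$ for some $e \ge 0$ making $(u, v^*)$ satisfy $(\star)$; the image of $A_{u, v^*}$ under $h^e c^{n-|u|+e}$ then falls in $P_{u,v}$. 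Establishing this existence, which I expect to combine Lemma~\ref{lem:binomial2} with the parity analysis encoded in Lemma~\ref{lem:carres}, is the technically delicate step of the argument.
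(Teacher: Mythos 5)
Your reduction is the right one (each pixel of $U_n$ must contain a point of $\mathcal{L}$, after which the pixel diameter $\sqrt{2}\cdot 2^{-n}<\epsilon$ finishes the proof), and your treatment of the cases where $(u,v)$ satisfies $(\star)$, where $v=\varepsilon$, and where $(u,v)=(\varepsilon,\varepsilon)$ is correct. But the main case is exactly where your argument stops: you reduce it to the claim that, keeping $u$ fixed, some $v^*$ with $\val_2(v^*)\in[\val_2(v)\cdot 2^e,(\val_2(v)+1)\cdot 2^e]$ makes $(u,v^*)$ satisfy $(\star)$, and you leave this unproved. That is a genuine gap, not a routine verification. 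Your own example shows why: for $(u,v)=(11010,10)$ the only non-extendable extensions of $v$ inside $u$ are $100$ and $1010$, each with binomial coefficient $2$, so no $v^*$ of the form $vz$ can work at all; you are rescued only by the boundary word $v^*=1100$ (the right endpoint of your interval with $e=2$), and nothing in your sketch indicates why such a boundary rescue is always available. I do not see how Lemma~\ref{lem:binomial2} together with Lemma~\ref{lem:carres} would deliver this existence statement in general.

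The paper avoids the difficulty by dropping the constraint that $u$ stay fixed: since the pixel is determined by the first $n$ binary digits only, one may append a common suffix to \emph{both} words. Writing $r$ for the (odd) number of occurrences of $v$ in $u$ and $i_\ell$ for the number of zeroes of $u$ to the right of the $\ell$th occurrence, one gets, for $k>\lceil\log_2|u|\rceil$,
$$\binom{u0^{2^k}1}{v0^{2^k}1}=\sum_{\ell=1}^r\binom{2^k+i_\ell}{2^k}\equiv r\equiv 1\bmod{2}$$
by Theorem~\ref{thm:lucas}, and the long block of zeroes followed by a final $1$ forces $(u0^{2^k}1,v0^{2^k}1)$ to satisfy $(\star)$ automatically. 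The point $c^{n-|u|}(A_{u0^{2^k}1,\,v0^{2^k}1})=(0.0^{n-|v|}v0^{2^k}1,\,0.0^{n-|u|}u0^{2^k}1)$ lies in $\mathcal{A}_{n-|u|}\subset\mathcal{L}$ and in the original pixel, which is exactly what your reduction requires. To complete your proof you should either establish your existence claim (including its boundary case) or switch to an extension of both coordinates along these lines.
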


\begin{proof}
    Let $\epsilon>0$. Choose $n$ such that $2^{-n+1}<\epsilon$.

Let $(x,y)\in U_n$. From Remark~\ref{rem:pixel}, there exists $(u,v)\in L\times L$ such that $\binom{u}{v}\equiv 1\bmod{2}$, $|u|\le n$ and $$(x,y)\in (0.0^{n-|v|}v,0.0^{n-|u|}u)+Q/2^n.$$ 

Assume first that $(u,v)$ satisfies $(\star)$. 
The segment $S_{u,v}$ of length $\sqrt{2}\cdot 2^{-|u|}$ having $A_{u,v}=(0.0^{|u|-|v|}v,0.u)$ as endpoint belongs to $\mathcal{A}_0$. Now apply $n-|u|$ times the homothety $c$ to this segment. So the segment $c^{n-|u|}(S_{u,v})$ of length $\sqrt{2}\cdot 2^{-n}$ of endpoint $(0.0^{n-|v|}v,0.0^{n-|u|}u)$ belongs to $\mathcal{A}_{n-|u|}$ and thus to $\mathcal{L}$. Hence $d((x,y),\mathcal{L})<\epsilon$.

Now assume that $(u,v)$ does not satisfy $(\star)$. By assumption, we have an odd number $r$ of occurrences of $v$ in $u$. For each occurrence of $v$ in $u$, we count the total number of zeroes after it. We thus define a sequence of non-negative integer indices 
$$|u| \geq i_1 \geq i_2 \geq \cdots \geq i_r \geq 0$$
corresponding to the number of zeroes following the first, the second, ..., the $r$th occurrence of $v$ in $u$. Now let $k$ be a non-negative integer such that $k > \lceil \log_2 |u|\rceil$.  We get 
$$
\binom{u0^{2^k}1}{v0^{2^k}1} = \sum_{\ell=1}^r \binom{2^k+i_\ell}{2^k}.
$$
Indeed, for each $\ell\in\{1,\ldots,r\}$, consider the $\ell$th occurrence of $v$ in $u$: we have the factorization $u=pw$ where the last letter of $p$ is the last letter of the  $\ell$th occurrence of $v$ and $|w|_0=i_\ell$. With this particular occurrence of $v$, we obtain occurrences of $v0^{2^k}1$ in $u0^{2^k}1$ by choosing $2^k$ zeroes among the $2^k+i_\ell$ zeroes available in $w0^{2^k}1$. Moreover, with the long block of $2^k$ zeroes, it is not possible to have any other occurrence of $v0^{2^k}1$ than those obtained from occurrences of $v$ in $u$.

For each $\ell\in\{1,\ldots,r\}$, we have
$$
\binom{2^k+i_\ell}{2^k} \equiv 1 \bmod{2}
$$
from Theorem~\ref{thm:lucas}. Since $r$ is odd, we get
$$
\binom{u0^{2^k}1}{v0^{2^k}1} \equiv 1 \bmod{2}.
$$
Now, for all $k\in\mathbb{N}$ such that $k > \lceil \log_2 |u|\rceil$, it is easy to check that $(u0^{2^k}1,v0^{2^k}1)$ satisfies $(\star)$. For the sake of clarity, if $k$ is a non-negative integer such that $k > \lceil \log_2 |u|\rceil$, we define
$$ u_k :=  u0^{2^k}1 \quad \text{and} \quad v_k :=  v0^{2^k}1.$$
As in the first part of the proof, the segment $S_{u_k, v_k}$ of length $\sqrt{2}\cdot 2^{-|u|-2^k-1}$ having $A_{u_k,v_k}=(0.0^{|u|-|v|}v0^{2^k}1,0.u0^{2^k}1)$ as endpoint belongs to $\mathcal{A}_0$. Now apply $n-|u|$ times the homothety $c$ to this segment. So the segment $c^{n-|u|}(S_{u_k,v_k})$ of length $\sqrt{2}\cdot 2^{-n-2^k-1}$ of endpoint $(0.0^{n-|v|}v0^{2^k}1,0.0^{n-|u|}u0^{2^k}1)$ belongs to $\mathcal{A}_{n-|u|}$ and thus to $\mathcal{L}$. Hence $d((x,y),\mathcal{L})<\epsilon$.
\end{proof}

Let $\epsilon>0$. It remains to show that, for all sufficiently large $n\in\mathbb{N}$,  $\mathcal{L}\subset [U_n]_\epsilon$. 

\begin{lemma}\label{lem:Un}
Let $\epsilon>0$. For all $(x,y)\in\mathcal{L}$, there exists $N$ such that for all $n\ge N$, $d((x,y),U_n)<\epsilon$.
\end{lemma}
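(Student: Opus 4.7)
The plan is to exploit the Hausdorff convergence $\mathcal{A}_n\to\mathcal{L}$ in order to reduce the approximation of $(x,y)$ by $U_n$ to the approximation of the image $h^j(c^i(S_{u,v}))$ of a single $(\star)$-segment, and then to exhibit an explicit family of pairs $(u',v')$ with odd binomial coefficient whose pixels in $U_n$ cluster along this image.

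Formally, fix $\epsilon>0$. Since $(\mathcal{A}_n)_{n\ge 0}$ converges to $\mathcal{L}$, there exists $m$ such that $\mathcal{L}\subset[\mathcal{A}_m]_{\epsilon/3}$, so I can pick $(x',y')=h^j(c^i(p))\in\mathcal{A}_m$ within $\epsilon/3$ of $(x,y)$, where $p\in\mathcal{A}_0$ and $0\le j\le i\le m$. Using $\mathcal{A}_0=\overline{\bigcup S_{u,v}}$ together with the observation that $h^j\circ c^i\colon(x,y)\mapsto(x/2^i,y/2^{i-j})$ is $1$-Lipschitz (because $j\le i$), I approximate $p$ by some $q\in S_{u,v}$ for a pair $(u,v)$ satisfying $(\star)$, obtaining $d(h^j(c^i(q)),(x,y))<2\epsilon/3$. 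It thus suffices to find, for large $n$, a pixel of $U_n$ within $\epsilon/3$ of $h^j(c^i(q))$.

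Writing $q=(0.0^{|u|-|v|}v+t,\,0.u+t)$ for some $t\in[0,2^{-|u|}]$, I would fix $n$ large enough that $n-i-|u|\ge 1$, pick a word $w\in\{0,1\}^{n-i-|u|}$ ending in the letter $1$ with $k:=\val_2(w)$ satisfying $|t-k/2^{n-i}|\le 1/2^{n-i}$, and use the pair $(u',v'):=(uw0^j,\,vw)$. A direct computation shows that the pixel associated with $(u',v')$ in $U_n$ has bottom-left corner $(0.0^{i+|u|-|v|}v+k/2^n,\ 0.0^{i-j}u+k/2^{n-j})$, which is exactly $h^j(c^i(q_k))$, where $q_k$ is the dyadic approximant of $q$ obtained by replacing $t$ with $k/2^{n-i}$. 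Since $h^j\circ c^i$ is $1$-Lipschitz and the pixel has side $1/2^n$, this pixel lies within $O(1/2^{n-j})$ of $h^j(c^i(q))$, hence within $\epsilon/3$ of it for $n$ large.

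The main obstacle is the combinatorial identity $\binom{uw0^j}{vw}\equiv 1\bmod 2$. I would prove it by applying Lemma~\ref{lem:binomial2} with the factorization $(uw)\cdot 0^j$ in the upper word:
$$\binom{uw\cdot 0^j}{vw}=\sum_{ab=vw}\binom{uw}{a}\binom{0^j}{b}.$$
The factor $\binom{0^j}{b}$ vanishes unless $b=0^r$ with $r\le j$; the requirement that $w$ ends in $1$ forces $vw$ to have no trailing $0$, so only $b=\varepsilon$ contributes and the sum collapses to $\binom{uw}{vw}$. Iterating Lemma~\ref{lem:star-extension} shows that $(uw,vw)$ also satisfies $(\star)$, so $\binom{uw}{vw}\equiv 1\bmod 2$ as required. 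I expect this to be the most delicate step: the segment $h^j(c^i(S_{u,v}))$ has slope $2^j$ rather than $1$ and is not the diagonal of any pixel, so matching it requires the asymmetric padding by $0^j$ on the $u$-side, together with the constraint on the last letter of $w$ that makes the above sum collapse to a single odd term. The triangle inequality then yields $d((x,y),U_n)<\epsilon$ for all $n$ sufficiently large.
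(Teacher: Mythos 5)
Your argument is correct, and its first half (approximating $(x,y)$ by a point $h^j(c^i(q))$ with $q\in S_{u,v}$ for a $(\star)$-pair, using that $h^j\circ c^i$ is non-expanding when $j\le i$) coincides with the paper's. The second half takes a genuinely different route. The paper never exhibits an explicit pixel: it notes that $(uw,vw)$ satisfies $(\star)$ for every $w$ of length $n$, applies Lemma~\ref{lem:carres} $j$ times to conclude that \emph{some} $z$ with $|z|=j$ makes $\binom{uwz}{vw}$ odd, and then bounds the distance from the slope-$2^j$ segment $h^j(c^i(S_{u,v}))$ to whichever of the $2^j$ candidate pixels survives, obtaining the error bound $2^j/2^{n+i+|u|}$. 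You instead pin down the extension $z=0^j$ by forcing $w$ to end in $1$, so that in Lemma~\ref{lem:binomial2} the factor $\binom{0^j}{b}$ annihilates every term except $b=\varepsilon$ and $\binom{uw0^j}{vw}=\binom{uw}{vw}\equiv 1\bmod 2$; the resulting pixel corner then lies exactly on $h^j(c^i(S_{u,v}))$ at the dyadic parameter $k/2^{n-i}$, and the existence of an odd $k$ within distance $1$ of $t\,2^{n-i}$ (which is where your hypothesis $n-i-|u|\ge 1$ is needed) completes the estimate. Your route is more constructive, avoids the slope-$2^j$ geometry, and in fact yields the identity $\binom{uw0^j}{vw}=\binom{uw}{vw}$ exactly rather than merely modulo $2$, so it transfers verbatim to the mod-$p$ setting of Section~\ref{sec:5}, where the paper instead invokes the mod-$p$ analogue of Lemma~\ref{lem:carres}. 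Two cosmetic remarks: Lemma~\ref{lem:binomial2} is stated for words of $L$ while your $0^j\notin L$, but the identity holds for arbitrary words and the paper already applies it in that generality; and the explicit points of $U_n$ your construction produces converge to $h^j(c^i(q))$ with controlled gaps, which is exactly the sequence $((f_n,g_n))_{n\ge 0}$ that the subsequent Theorem extracts from the paper's proof, so nothing downstream is lost.
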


\begin{proof}
    Let $\epsilon>0$ and let $(x,y)\in\mathcal{L}$. Since $(\mathcal{A}_n)_{n\ge 0}$ converges to $\mathcal{L}$, there exists $N_1$ and $(x',y')\in\mathcal{A}_{N_1}$ such that,  
$$d((x,y),(x',y'))<\epsilon/4.$$
By definition of $\mathcal{A}_{N_1}$, there exist $i,j$ such that $0\le j\le i\le N_1$ and $(x_0',y_0')\in\mathcal{A}_0$ such that
$$h^j(c^i((x_0',y_0')))=(x',y').$$
By definition of $\mathcal{A}_0$, there exists a pair $(u,v)\in L\times L$ satisfying $(\star)$ and $(x_0'',y_0'')\in S_{u,v}$ such that
 $$d((x_0',y_0'),(x_0'',y_0''))<\epsilon/4.$$
Notice that, since $j\le i$,  
\begin{eqnarray*}
d((x',y'),h^j(c^i((x_0'',y_0''))))&=&d(h^j(c^i((x_0',y_0'))),h^j(c^i((x_0'',y_0''))))\\
&\le& d((x_0',y_0'),(x_0'',y_0''))<\epsilon/4.
\end{eqnarray*}
Consequently, we get that
$$d((x,y),h^j(c^i( (x_0'',y_0''))))<\epsilon/2.$$

In the second part of the proof, we will show that $d(h^j(c^i((x_0'',y_0''))),U_n)<\epsilon/2$ for all sufficiently large $n$. We will make use of the constants $i,j$ and words $u,v$ given above.

Let $n\ge 0$. Since $(u,v)\in L\times L$ satisfies $(\star)$, the pairs $(uw,vw)$ satisfy $(\star)$ for all words $w$ of length $n,\ldots,n+i$. In particular, if $w$ is a word of length $n$, since $\binom{uw}{vw}\equiv 1\bmod{2}$, applying Lemma~\ref{lem:carres}, at least one of the two binomial coefficients $\binom{uw0}{vw}$, $\binom{uw1}{vw}$ is odd. Iterating this argument $j$ times, we conclude that at least one of the $2^j$ binomial coefficients of the form $\binom{uwz}{vw}$ with $|z|=j$ is odd. Otherwise stated, at least one of the square regions
$$(0.0^{i+|u|-|v|}vw,0.0^{i-j}uwz)+Q/2^{n+i+|u|}, \text{ with }|z|=j,$$
is a subset of $U_{n+i+|u|}$. We observe that each of these square regions is intersected by $h^j(c^i(S_{u,v}))$. Indeed, the latter segment has $(0.0^{i+|u|-|v|}v,0.0^{i-j}u)$ and $(0.0^{i+|u|-|v|}v111\cdots,0.0^{i-j}u111\cdots)$ as endpoints and slope $2^j$. This can be visualized in Figure~\ref{fig:segments} where each rectangular gray region contains at least one square region from $U_{n+i+|u|}$.
\begin{figure}[h!t]
    \centering
    \scalebox{.75}{\psfrag{u0n}{$u0^n$}\psfrag{v0n}{$v0^n$}\psfrag{u0n1}{$u0^{n+1}$}\psfrag{v0n1}{$v0^{n+1}$}\psfrag{u0n2}{$u0^{n+2}$}\psfrag{v0n2}{$v0^{n+2}$}\psfrag{u1n2}{$u1^{n+2}$}\psfrag{v1n2}{$v1^{n+2}$}
\psfrag{u1n1}{$u1^{n+1}$}\psfrag{v1n1}{$v1^{n+1}$}
\psfrag{u1n}{$u1^{n}$}\psfrag{v1n}{$v1^{n}$}
\psfrag{Suv}{$S_{u,v}$}\psfrag{0}{$0$}\psfrag{1}{$1$}\psfrag{2}{$2^{-(n+i+|u|)}$}
\psfrag{cSuv}{$c(S_{u,v})$}\psfrag{hcSuv}{$h(c(S_{u,v}))$}\psfrag{c2Suv}{$c^2(S_{u,v})$}\psfrag{hc2Suv}{$h(c^2(S_{u,v}))$}\psfrag{h2c2Suv}{$h^2(c^2(S_{u,v}))$}
\includegraphics{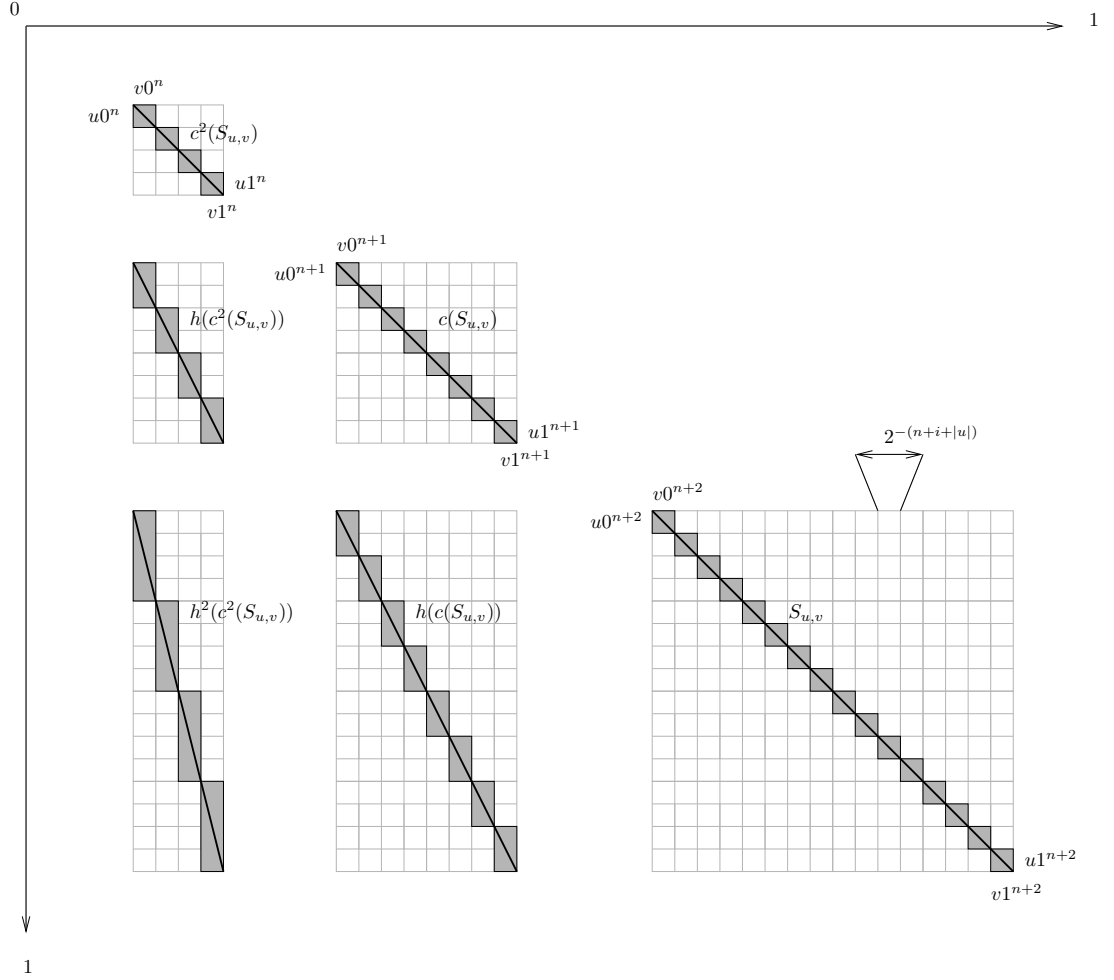}}
    \caption{Situation occurring in the proof of Lemma~\ref{lem:Un}.}
    \label{fig:segments}
\end{figure}
Consequently, every point\footnote{For every point of $h^j(c^i(S_{u,v}))$, this observation will permit us to build a sequence $((f_n,g_n))_{n\ge 0}$ converging to it and such that $(f_n,g_n)\in U_n$, for all $n$, and the distance between two consecutive elements of the sequence tends to zero when $n$ tends to infinity.} of $h^j(c^i(S_{u,v}))$ is at distance at most $2^j/2^{n+i+|u|}$ from a point in $U_{n+i+|u|}$. In particular, this holds for $h^j(c^i((x_0'',y_0'')))$. We choose $N_2$ such that $2^j/2^{N_2+i+|u|}<\epsilon/2$. Hence, for all $n\ge N_2+i+|u|$, 
$$d(h^j(c^i((x_0'',y_0''))),U_{n})<\epsilon/2.$$
To conclude the proof, for all $n\ge N_2+i+|u|$, we have
$$d((x,y),U_{n})<\epsilon.$$
\end{proof}

\begin{theorem}
    The sequence $(U_n)_{n\ge 0}$ converges to $\mathcal{L}$.
\end{theorem}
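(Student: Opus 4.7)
The plan is to unpack the definition of Hausdorff convergence and feed both of the preceding lemmas into it. Convergence of $(U_n)_{n\ge 0}$ to $\mathcal{L}$ in $(\mathcal{H}(\mathbb{R}^2),d_h)$ means: for every $\epsilon>0$ there exists $N$ such that $U_n\subset [\mathcal{L}]_\epsilon$ \emph{and} $\mathcal{L}\subset [U_n]_\epsilon$ for all $n\ge N$. So I would fix $\epsilon>0$ and establish these two inclusions separately.

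The inclusion $U_n\subset [\mathcal{L}]_\epsilon$ is immediate from Lemma~\ref{lem:first_inclusion}: simply pick $N_0$ with $2^{-N_0+1}<\epsilon$ and the inclusion holds for all $n\ge N_0$.

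The reverse inclusion $\mathcal{L}\subset [U_n]_\epsilon$ is where the work lies, because Lemma~\ref{lem:Un} is only a pointwise statement: the integer $N$ it produces a priori depends on the chosen point $(x,y)\in\mathcal{L}$. To upgrade this to a uniform statement I would invoke the compactness of $\mathcal{L}$, which is guaranteed by its construction as the limit in $(\mathcal{H}(\mathbb{R}^2),d_h)$ of a Cauchy sequence (Lemma~\ref{lem:cauchy_seq}). Concretely, cover $\mathcal{L}$ by finitely many open balls $B(p_1,\epsilon/2),\ldots,B(p_k,\epsilon/2)$ centered at points $p_i\in\mathcal{L}$. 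Apply Lemma~\ref{lem:Un} with tolerance $\epsilon/2$ to each $p_i$ to obtain integers $N_i$ with $d(p_i,U_n)<\epsilon/2$ for all $n\ge N_i$. Setting $N:=\max(N_0,N_1,\ldots,N_k)$ and using the triangle inequality, for any $(x,y)\in\mathcal{L}$ there is some $p_i$ with $d((x,y),p_i)<\epsilon/2$, and then
$$d((x,y),U_n)\le d((x,y),p_i)+d(p_i,U_n)<\epsilon$$
for every $n\ge N$. This gives $\mathcal{L}\subset [U_n]_\epsilon$ for all such $n$.

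Combining the two inclusions yields $d_h(U_n,\mathcal{L})<\epsilon$ for every $n\ge N$, which is exactly convergence. The only genuinely delicate point is the pointwise-to-uniform passage in the second inclusion, and it is handled cleanly by the standard finite-subcover trick; everything else is a bookkeeping exercise on top of Lemmas~\ref{lem:first_inclusion} and~\ref{lem:Un}.
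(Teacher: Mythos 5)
Your proof is correct and follows essentially the same route as the paper: Lemma~\ref{lem:first_inclusion} for $U_n\subset[\mathcal{L}]_\epsilon$, and compactness of $\mathcal{L}$ with a finite subcover to upgrade the pointwise statement of Lemma~\ref{lem:Un} to the uniform inclusion $\mathcal{L}\subset[U_n]_\epsilon$. If anything, your version is slightly cleaner, since you invoke only the \emph{statement} of Lemma~\ref{lem:Un} together with the triangle inequality, whereas the paper re-enters the proof of that lemma to extract explicit approximating sequences $(f_i,g_i)$ before applying the same finite-subcover argument.
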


\begin{proof}
Let $\epsilon>0$. From Lemma~\ref{lem:first_inclusion}, it suffices to show that $\mathcal{L}\subset [U_n]_\epsilon$ for all sufficiently large $n\in\mathbb{N}$. For all $(x,y)\in\mathcal{L}$, from the proof of Lemma~\ref{lem:Un}, there exists a sequence $((f_i(x,y),g_i(x,y))_{i\ge 0}$ such that $(f_i(x,y),g_i(x,y))\in U_i$, for all $i$, and there exists $N_{(x,y)}$ such that, for all $i,j\ge N_{(x,y)}$,
\begin{equation}
    \label{eq:1ein}
d((f_i(x,y),g_i(x,y)),(f_j(x,y),g_j(x,y)))<\epsilon/2    
\end{equation}
and $$d((f_i(x,y),g_i(x,y)),(x,y))<\epsilon/2.$$
We trivially have
$$\mathcal{L}\subset\bigcup_{(x,y)\in\mathcal{L}}B((f_{N_{(x,y)}}(x,y),g_{N_{(x,y)}}(x,y)),\epsilon/2).$$
Since $\mathcal{L}$ is compact, we can extract a finite covering: there exist $(x_1,y_1),\ldots,(x_k,y_k)$ in $\mathcal{L}$ such that
$$\mathcal{L}\subset\bigcup_{j=1}^kB((f_{N_{(x_j,y_j)}}(x_j,y_j),g_{N_{(x_j,y_j)}}(x_j,y_j)),\epsilon/2).$$
Let $N=\max_{j=1,\ldots,k} N_{(x_j,y_j)}$. From \eqref{eq:1ein}, we deduce that, for all $j\in\{1,\ldots,k\}$ and all $n\ge N$, 
$$B((f_{N_{(x_j,y_j)}}(x_j,y_j),g_{N_{(x_j,y_j)}}(x_j,y_j)),\epsilon/2) \subset B((f_{n}(x_j,y_j),g_{n}(x_j,y_j),\epsilon)$$
and therefore
$$\mathcal{L}\subset\bigcup_{j=1}^kB((f_{n}(x_j,y_j),g_{n}(x_j,y_j)),\epsilon)\subset [U_n]_\epsilon.$$
\end{proof}

\section{Extension modulo $p$}\label{sec:5}

In this paper, for the sake of simplicity, we have only considered odd binomial coefficients. It is straightforward to adapt our reasonings, constructions and results to a more general setting. Let $p$ be a fixed prime and $r\in\{1,\ldots,p-1\}$. We can extend Definition~\ref{def:Tn} to 
$$
T_{n,r}:= \bigcup \left\{(\val_2(v),\val_2(u))+Q\mid u,v\in L_n, \binom{u}{v}\equiv r\bmod{p}\right\}\subset [0,2^n]\times [0,2^n]
$$
and introduce corresponding sets $U_{n,r}$ as in Definition~\ref{def:Un}. Since we make use of Lucas' theorem, we limit ourselves to congruences modulo a prime. We just sketch the main differences with the case $p=2$. See, for instance, Figure~\ref{fig:U72} for the case $p=3$ and $r=2$.

In this setting, analogously to Lemma~\ref{lem:carres}, one can observe that if $\binom{u}{vb}\equiv r\bmod{p}$ for some $b\in\{0,1\}$ then there exists $a\in\{0,1\}$ such that $\binom{ua}{vb}\equiv r\bmod{p}$. This observation is useful to adapt the proof of Lemma~\ref{lem:Un}.

The $(\star)$ condition becomes $(\star)_r$
$$\binom{u}{v}\equiv r\bmod{p},\ \binom{u}{v0}= 0 \text{ and }\binom{u}{v1}=0$$
and Lemma~\ref{lem:star-extension} still holds. Note that the pairs $(u,v)$ satisfying this condition depend on the choice of $p$ and $r$. The sets $\mathcal{A}_n$ are defined as before.

\begin{remark}
    The pair $(1,1)$ satisfies $(\star)_r$ if and only if $r=1$. Thus, the segment with endpoints $(1/2^{2m+1},1/2^{m+1})$ and $(1/2^{m},1)$ belongs to $\mathcal{A}_{2m}$ only if $r=1$. This observation made in Example~\ref{exa:32} was used in the proof of Lemma~\ref{lem:cauchy_seq}.
\end{remark}
We present an alternative proof of Lemma~\ref{lem:cauchy_seq} in this general setting.
\begin{proof}[Proof of Lemma~\ref{lem:cauchy_seq} (generalization)]
Let $\epsilon >0$. 
Choose $N$ such that $2^{-N} < \epsilon$. 
Let $m,n$ such that $n>m>N$. 
From \eqref{eq:stabilisation},  $[\mathcal{A}_m]_\epsilon$ contains $\mathcal{A}_n \cap ([1/2^{m+1},1]\times [0,1])$. 
It remains to show that $[\mathcal{A}_m]_\epsilon$ contains $\mathcal{A}_n\cap [0,1/2^{m+1}]\times [0,1]$. 
The pairs $(u0^{|u|}1^r0, u0^{|u|}10)$ satisfy $(\star)_r$ for all words $u\in 1\{0,1\}^*$. 
In this case, the segment of endpoints $(0.0^{r-1}u0^{|u|}10,0.u0^{|u|}1^r0)$ and $(0.0^{r-1}u0^{|u|}10 + 2^{2|u|+r+1},0.u0^{|u|}1^r0 + 2^{2|u|+r+1})$ has length $\sqrt{2} \; 2^{-(2|u|+r+1)}$ and belongs to $\mathcal{A}_0$. 
If we apply $m$ times $c$ and $j\in\{0,\ldots,m\}$ times $h$ to the latter segment, we obtain a segment $S(u,j)$ with endpoint 
$$
	A(u,j):=(0.0^{m+r-1}u0^{|u|}10,0.0^{m-j}u0^{|u|}1^r0)
$$ 
and of length less than $\sqrt{2} \; 2^{-(2|u|+r+1)}$ that belongs to $\mathcal{A}_m$. 
Note that this segment $S(u,j)$ lies into $R(j):=[1/2^{m+r}, 1/2^{m+r-1}]\times [1/2^{m-j+1},1/2^{m-j}]$. 
Observe that
$$
	\bigcup_{u\in 1\{0,1\}^*} A(u,j)
$$
is the diagonal of the region $R(j)$. 
Consequently, for all $j\le m$, we have
$$
	\bigcup_{u\in 1\{0,1\}^*} [S(u,j)]_\epsilon\supset [0,1/2^{m+1}]\times [1/2^{m-j+1},1/2^{m-j}]
$$
since $2^{-(m+r-1)} \leq 2^{-N} < \epsilon$.
Letting $j$ range from $0$ to $m$, we deduce that $[\mathcal{A}_m]_\epsilon$ contains $[0,1/2^{m+1}]\times [0,1]$ and therefore $\mathcal{A}_n \cap ([0,1/2^{m+1}]\times [0,1])$. 
Since $\mathcal{A}_m\subset\mathcal{A}_n$, we directly have that $[\mathcal{A}_n]_\epsilon$ contains $\mathcal{A}_m$.

%
%
\end{proof}

The proof of Lemma~\ref{lem:first_inclusion} follows the same lines. Simply replace the word $u0^{2^k}1$ (resp. $v0^{2^k}1$) with $u0^{p^k}1$ (resp. $v0^{p^k}1$). We may apply Lucas' theorem with base-$p$ expansions.

\section{Appendix}\label{sec:6}

\begin{example}\label{exa:approx}
We have represented in Figure~\ref{fig:approx} the $1370$ segments of $\mathcal{A}_0$ for words of length at most $8$ (we are missing segments of length $\le\sqrt{2}/2^{9}$) and we have applied the maps $h^j(c^i(\cdot))$ to this set of segments for $0 \le j\le i\le 4$. Thus we have an approximation of $\mathcal{A}_4$. Excepting the segments of length $\le\sqrt{2}/2^{9}$ and their images, we have an exact image of $\mathcal{L}$ inside $[1/32,1]\times [0,1]$.
\begin{figure}[h!tb]
   \centering
   \rotatebox{270}{\scalebox{.5}{\includegraphics{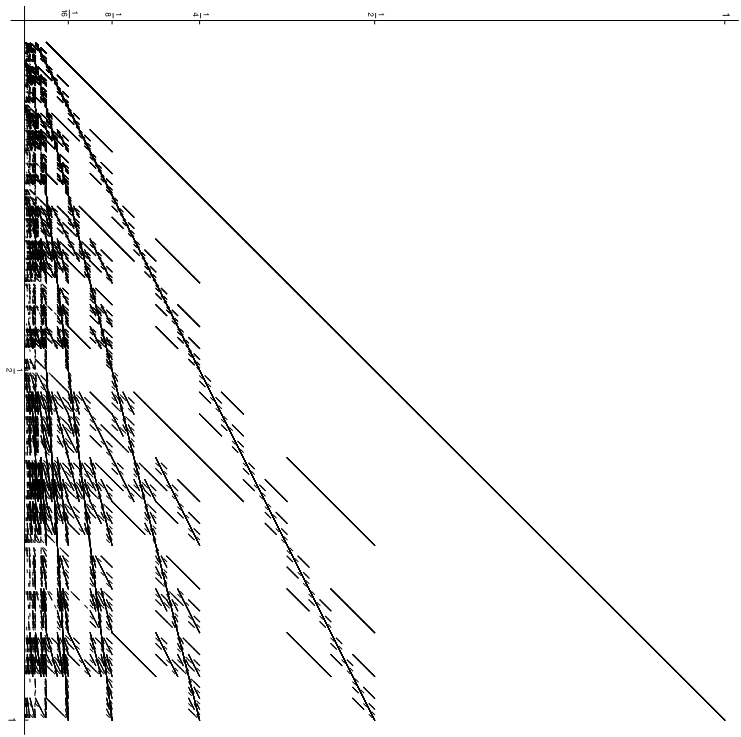}}}
   \caption{An approximation of the limit set $\mathcal{L}$.}
   \label{fig:approx}
\end{figure}
\end{example}

\begin{example}
We have represented the set $U_9$ in Figure~\ref{fig:U9}.
\begin{figure}[h!tb]
   \centering
   {\scalebox{.4}{\includegraphics{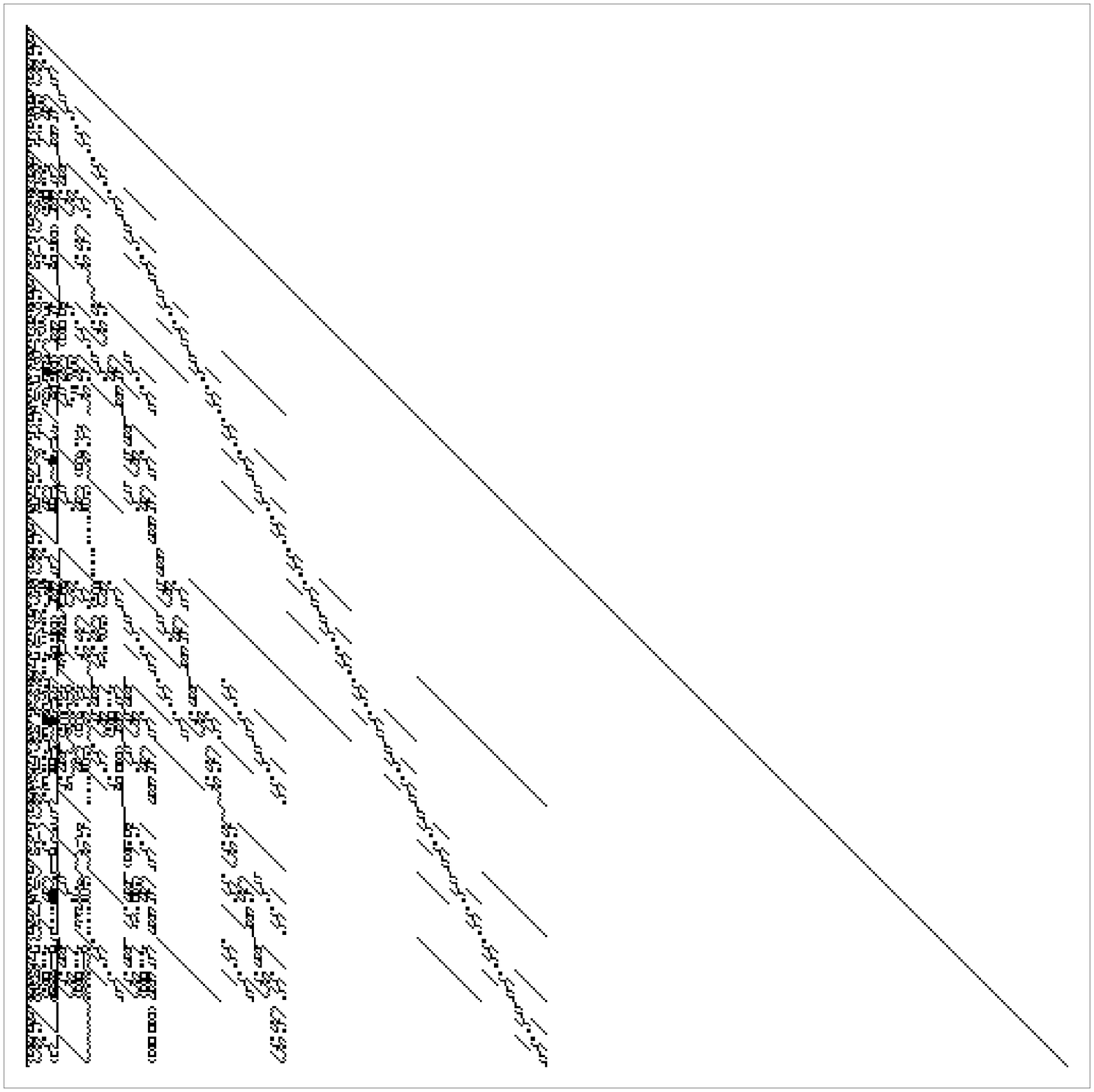}}}
   \caption{The set $U_9$.}
   \label{fig:U9}
\end{figure}
\end{example}

\begin{example}
We have represented in Figure~\ref{fig:U72} the set $U_{7,2}$ when considering binomial coefficients congruent to $2$ modulo $3$ and an approximation of the limit set $\mathcal{L}$ proceeding as in Example~\ref{exa:approx}.
\begin{figure}[h!tb]
   \centering
   \begin{minipage}{.45\linewidth}
       \scalebox{.2}{\includegraphics{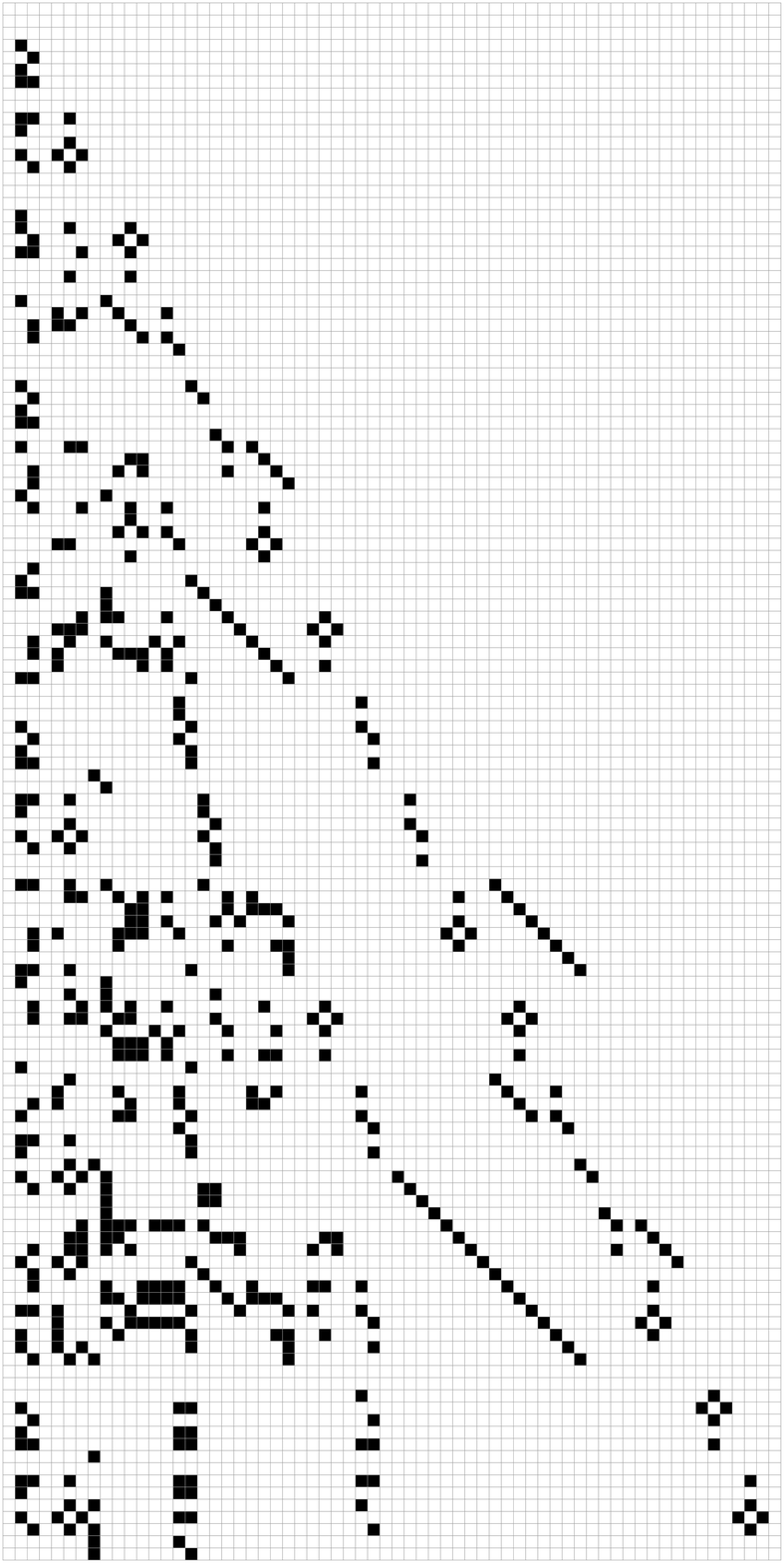}}
   \end{minipage}
    \quad \begin{minipage}{.5\linewidth}
       \rotatebox{270}{\scalebox{.55}{\includegraphics{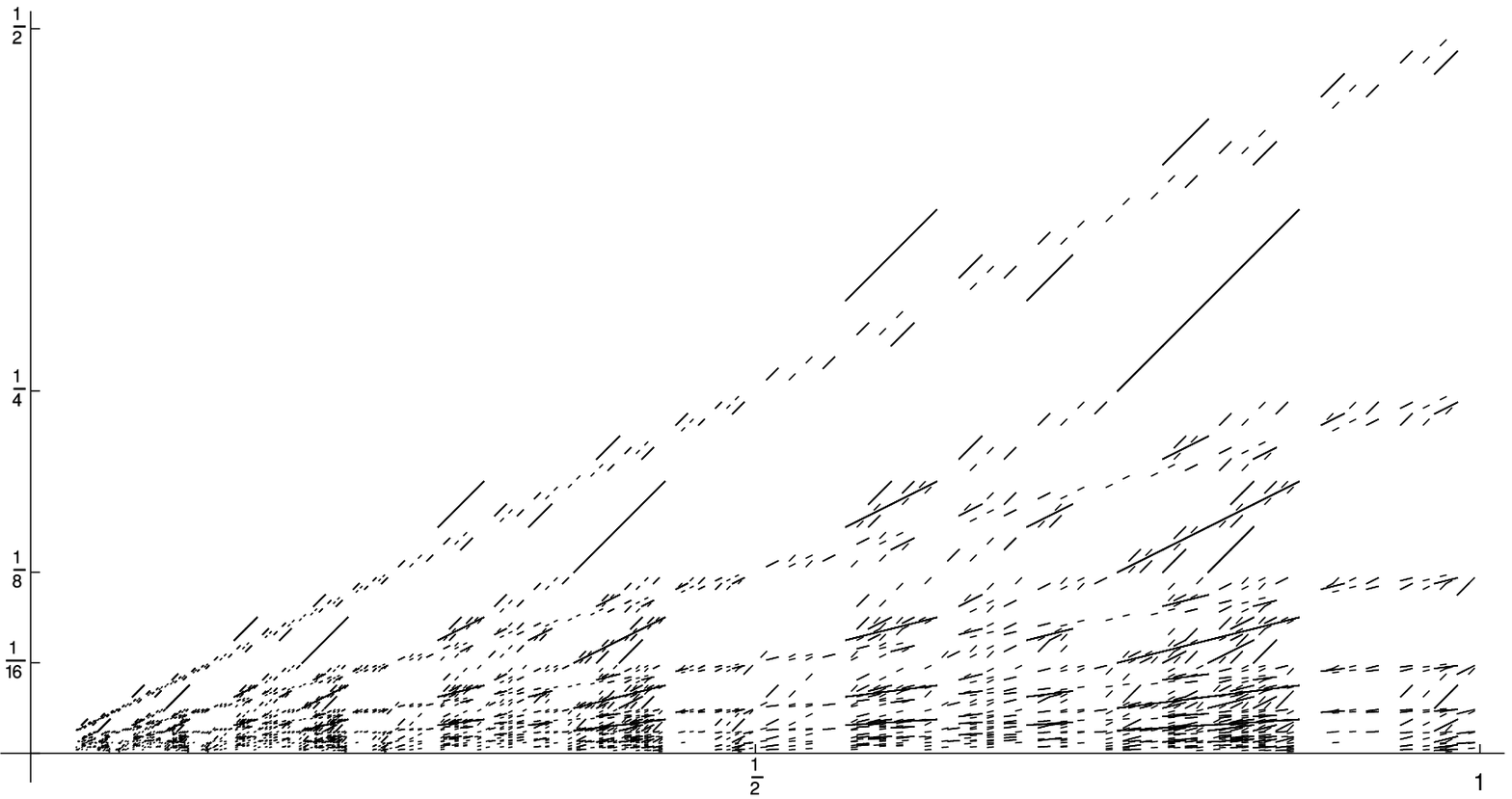}}}
   \end{minipage}
   \caption{The set $U_{7,2}$ and an approximation of the corresponding set $\mathcal{L}$.}
   \label{fig:U72}
\end{figure}
\end{example}

\end{document}